%% file: DLQR_automatica_final.tex
\documentclass[twocolumn]{autart}
\usepackage{graphicx}     
\usepackage{amsmath}
\usepackage{amssymb}
\usepackage{mathrsfs}
\usepackage{ntheorem}
\usepackage{bbm}
\usepackage{bm}
\usepackage{amsfonts}
\usepackage{cases}
\usepackage{graphicx}
\usepackage{wrapfig}
\usepackage{float}
\usepackage{booktabs}
\usepackage{breakcites}
\usepackage{multirow}
\usepackage{xcolor}
\usepackage{algorithm}
\usepackage{algpseudocode}
\usepackage{graphicx}
\usepackage{subcaption}
\usepackage{epstopdf}
\usepackage{url}
\usepackage{balance}
\usepackage{natbib}
\usepackage{comment}
\usepackage{rotating}
\usepackage{wrapfig}
\graphicspath{{./}{Fig/}}

\allowdisplaybreaks[4]

\algrenewcommand\algorithmicrequire{\textbf{Input:}}
\algrenewcommand\algorithmicensure{\textbf{Output:}}

\newcommand{\Dstab}{\mathcal{D}_{\mathrm{stab}}}
\newcommand{\tr}{\operatorname{tr}}
\newcommand{\R}{\mathbb{R}}

\newcommand{\lmin}{\lambda_{\min}}

\newtheorem{lemma}{Lemma}
\newtheorem{corollary}{Corollary}
\newtheorem{proposition}{Proposition}
\newtheorem{theorem}{Theorem}
\newtheorem{remark}{Remark}
\newtheorem{example}{Example}
\newtheorem{assumption}{Assumption}

\theoremstyle{nonumberplain}
\newtheorem{proof}{Proof}
\begin{document}

\begin{frontmatter}
\title{Exact Variance of Random Return in Distributional LQR and Its Application  to Mean-Variance Optimal Control}

\author[Imperial]{Ruyi Teng}\ead{ruyi.teng24@imperial.ac.uk},
\author[KTH]{Zifan Wang}\ead{zifanw@kth.se},
\author[Imperial,cor]{Yulong Gao}\corauth[cor]{Corresponding author.}\ead{yulong.gao@imperial.ac.uk}

\address[Imperial]{Department of Electrical and Electronic Engineering,
Imperial College London, United Kingdom}
\address[KTH]{Division of Decision and Control, KTH Royal Institute of Technology}

\begin{abstract}
The classical linear quadratic regulator (LQR) minimizes the expected cumulative return but fails to account for performance variability, rendering it inadequate for risk-aware applications. 
To address this, we introduce the variance of the cumulative return as a risk measure in LQR.
We derive the first exact closed-form expression for the variance of the discounted infinite-horizon return within the discrete-time Distributional LQR framework, for i.i.d. disturbances with symmetric probability densities.
For Gaussian disturbances, this expression elegantly simplifies to a form dependent only on the disturbance covariance. Leveraging these theoretical foundations, we formulate a mean-variance optimal control problem that explicitly manages the trade-off between expected return and performance variability. To address the resulting non-convex optimization problem, we propose a novel adjoint gradient descent algorithm for a penalized formulation of the original problem, and establish that all iterates remain stabilizing and converge to a stationary point of the penalized objective.
The effectiveness of this framework and the inherent risk-performance trade-off are demonstrated through numerical experiments.

\noindent \textbf{Keywords: Distributional LQR, risk-averse control, adjoint gradient descent algorithm, mean-variance optimization} 
\end{abstract}

\end{frontmatter}

\thispagestyle{empty}
\pagestyle{plain}

\begin{sloppypar}
\frenchspacing
	\input{Sec/intro}

        \input{Sec/problem}

     \input{Sec/variance} \input{Sec/optimalcontrol}

        \input{Sec/casestudy} \input{Sec/conclusion}

      \input{Sec/Appendix_A}
      \input{Sec/Appendix_B}

      \input{Sec/Appendix_C}
    \input{Sec/Appendix_D}

      \input{Sec/Appendix_E}
    \bibliographystyle{agsm}
\footnotesize{\bibliography{reference}}

\end{sloppypar}
\end{document}

%% file: Sec/intro.tex
\section{Introduction}
The linear quadratic regulator (LQR)
is a classical optimal control problem which provides linear state feedback control strategies to guarantee optimal expected cumulative quadratic return. It is a cornerstone in modern control and has been widely utilized in various domains, such as biology \citep{li2004iterative}, motor control \citep{todorov2002optimal}, economics \citep{hansen1980formulating} and robotics \citep{perez2012lqr}.
Although the structure of the expected cumulative return is completely understood by the Algebraic Riccati equation, all real-world systems live with various uncertainties and randomness. The optimality of the classical LQR has been shown to be inadequate in the presence of randomness \citep{tzortzis2015dynamic}, as it focuses solely on the mean and does not account for higher-order statistical properties.
Among these properties, the variance of the cumulative quadratic return offers a natural extension to evaluate its variability, which enables us to consider risk-aware control criteria, e.g., mean-variance optimal control. For instance, in building climate control, a controller minimizing only the expectation may achieve good average energy consumption but exhibit high day-to-day variability, leading to occasional large energy bills. Explicitly minimizing the mean-variance trade-off is therefore both natural and practically important.
Note that solving risk-aware control problems is important for many other practical systems, e.g., autonomous driving~\citep{gao2021risk}, power systems \citep{10363432}, and portfolio management \citep{markowitz2008portfolio,sood2026deep}. To the best of our knowledge, an exact closed-form characterization of the return variance in the discrete-time LQR has not yet been established. This gap motivates us to consider the following fundamental questions:
\begin{center}
   \emph{How can the return variance induced by a given policy be evaluated analytically in discrete-time LQR, and how can this variance be optimized explicitly through feedback design?}
\end{center}
 Addressing these questions has two important implications. 
First, an analytical expression for the variance provides a more complete assessment of the cumulative return associated with a given state-feedback policy, beyond what is captured by the expected return alone. Second, such a characterization is essential for developing a tractable mean–variance optimal control framework that incorporates risk sensitivity directly into the controller synthesis process. From this perspective, variance is not merely a post-hoc statistical descriptor of performance, but a controllable quantity that can be shaped through appropriate feedback design.
\subsection{Contributions}
Building on the recent characterization of \emph{cumulative} random return in the distributional LQR~\citep{wang2023policy,11020758,teng2025quadratic}, this paper aims to study the closed-form variance expression of the distributional LQR and develop a mean-variance optimal control framework. 
\begin{enumerate}
    \item We derive an exact closed-form expression for the variance of the cumulative random return in a discrete-time setting under i.i.d. disturbances whose probability density functions are symmetric with respect to the origin (Theorem~\ref{Theorem:mainresult}). We establish an upper bound on this variance (Corollary~\ref{theorem:bound}), which is shown empirically to be tighter than the bound in~\cite{11020758}. 
    \item When disturbances are i.i.d., zero-mean Gaussian, we show that the exact variance is further refined based solely on the covariance of disturbances (Theorem~\ref{theorem:variance}).
    \item We formulate a mean–variance optimal control framework that can explicitly trade off control performance and risk. To address this non-convex optimization problem, we propose a novel adjoint gradient descent algorithm (Algorithm~\ref{Alg:Adjoinddescent}) for a penalized formulation of the original problem and establish its convergence guarantees (Theorem~\ref{thm:conv}). 
\end{enumerate}

\subsection{Related work}
Our work focuses on variance characterization and mean-variance optimal control in the LQR setting.
Since variance is  a  second-moment risk descriptor, this positions our work within risk-aware LQR problems, which mainly focus on replacing the original expectation objective by risk measures to analyze the statistical properties of the involved random variables.  A core question for these problems is whether risk-aware criteria are computationally tractable. In \citep{jacobson1973optimal,whittle1990risk},  linear exponential quadratic  Gaussian (LEQG) is developed to penalize tail behavior using exponential functions of quadratic forms, which yields an analytical solution to the optimal control problem. The direct integration of the LQR with the Conditional Value-at-Risk (CVaR) is not computationally tractable, since CVaR rarely enjoys a closed-form
expression  and is computationally expensive.  In \cite{chapman2021toward}, the upper bound on the CVaR-defined finite cost against distributional ambiguity is developed for tractable sub-optimal control design. In \cite{tsiamis2020risk}, a
risk-constrained LQR is  proposed to consider the cumulative expected
predictive variance of the state cost as a risk measure, not the variance of discounted infinite-horizon cumulative cost in our work. 
On the other hand, distributional robustness is another view of introducing risk into LQ control problems. In \cite{van2015distributionally}, a distributionally robust chance-constrained LQ control is proposed using only mean/covariance information. In addition, \cite{kim2023distributional} address a min-max LQ control problem by synthesizing a control policy by adopting an adversary to select the worst-case distribution at each time step.  However, by optimizing against the worst-case distribution of uncertain disturbances, these distributionally robust approaches can yield overly conservative control policies, as the worst-case scenario may be highly improbable in practice.

Another related concept is the  mean-variance structure and its application in risk-averse circumstances. Originally in portfolio theory, risk is modeled by the second moment and penalized by means of a risk aversion parameter (\cite{markowitz2008portfolio}). Most mean–variance studies focus on portfolio selection \cite{rubinstein1973mean,kim2021mean,bjork2014mean}, asset allocation \cite{basak2010dynamic}, decision analytics \cite{luenberger1997investment}, and market design (e.g., electricity \cite{gokgoz2012financial}). In these settings, the variance can be readily computed or approximated, which is different from our case. 
Beyond finance, mean–variance criteria have been embedded in reinforcement learning (RL) and/or Markov decision process (MDP) \citep{mannor2011mean,sobel1994mean,sood2026deep}. For example, \cite{mannor2011mean} analyze finite-horizon MDPs and show NP-hardness with pseudo-polynomial schemes and \cite{sobel1994mean} studies the mean–variance trade-off and characterizes Pareto optima. 

In systems and control, mean–variance formulations have received limited attention, with only a few studies addressing them. \cite{barbieri2020mean} treat infinite-horizon discrete-time linear systems with multiplicative noise via a mean-field decomposition and penalize per-stage variance. \cite{alexanderian2017mean} optimize a mean–variance criterion with a risk aversion parameter in a PDE setting.  \cite{COSTA20082487} formulate a multi-period mean-variance portfolio with Markov-switching parameters while \cite{COSTA2012304} focus on finite-horizon mean-variance optimal control with Markov jumps and multiplicative noise. However, none of these  works provides the exact variance of the cumulative return. 
One most related work is \cite{bijl2016mean}. This work derives exact mean and variance expressions for continuous-time LQG costs via second-moment expansions with Lyapunov/Sylvester operators, and optimizes the variance using gradient descent without convergence guarantees. In contrast, our framework provides a compact random-return-based variance characterization in the discrete-time LQR, enabling weighted mean--variance controller synthesis with stability and convergence guarantees.

Recent advancements in policy gradient methods have demonstrated global convergence and polynomial complexity for the classical LQR, despite the inherent non-convexity of the objective. These results rely critically on the LQR cost landscape being coercive and satisfying gradient dominance~\citep{fazel2018global}. These guarantees have been robustly extended to noisy, finite-horizon environments~\citep{hambly2021policy}. The fundamental difference of sample complexity bounds between model-based and model-free methods for solving the LQR was characterized in ~\cite{tu2019gap}. These insights establish the modern theoretical foundation for data-driven control~\citep{hu2023toward}. However, these standard policy gradient techniques cannot be straightforwardly applied to the mean-variance control problem in our setting, as the requisite coercivity and gradient dominance properties fail to hold. To overcome this limitation, we propose a novel adjoint gradient descent algorithm and formally prove its convergence to a stationary point of the penalized objective.

This paper is organized as follows. Section 2 presents some preliminaries and the problems to be studied. Section 3 derives a closed-form variance expression for the random return. Section 4 presents the mean-variance optimal control design, and Section 5 validates it through case studies. Finally, conclusions are given in Section 6.

%% file: Sec/problem.tex
\section{Preliminary and problem statement}

\textbf{Notation}. Denote by $\mathbb{R}$ the set of real numbers, $\mathbb{C}$ the set of complex numbers, $\mathbb{N}$ the set of natural numbers,  $\mathbb{R}^{n \times m} $ the set of $n \times m$ real-valued matrices, $\mathbb{C}^{n \times m} $ the set of $n \times m$ complex-valued matrices, and $\mathbb{S}^n $ the space of  real symmetric matrices $\in \mathbb{R}^{n\times n}$. Define
$\mathbb{S}^n_+ := \left\{ P \in \mathbb{S}^n \mid x^\top P x \geq 0,\ \forall x \in \mathbb{R}^n \right\}$ and
$\mathbb{S}^n_{++} := \left\{ P \in \mathbb{S}^n \mid x^\top P x > 0,\ \forall x \neq 0 \right\}$. 
For a matrix $P\in \mathbb{R}^{n\times n}$, $\|P\|$ denotes its spectral norm, $\|P\|_F$ its Frobenius norm, $\rho(P)$ its spectral radius, $\mathrm{det}(P)$ its determinant, and $\mathrm{tr}(P)$  its trace. If $P\in \mathbb{S}^n$, 
$\lambda_{\min}(P)$ denotes its minimal eigenvalue and $\lambda_{\max}(P)$ denotes its maximal eigenvalue. 
For a random variable $X$, $\mathbb{E}[X]$ denotes its expectation and $\operatorname{Var}[X]$ denotes its variance. Notation $X_1\mathop{=}\limits^{D} X_2$  denotes that two random variables $X_1,X_2$ are equal in distribution. The subscript $t$
 denotes a physical time index in the system dynamics, whereas $k$ is a series index.

Consider a discrete-time linear system
\begin{equation}\label{Eq:linearsystem}
    x_{t+1}=Ax_{t}+Bu_{t}+v_t
\end{equation}
where \( x_t \in \mathbb{R}^n \), \( u_t \in \mathbb{R}^{m} \), and \( v_t \in \mathbb{R}^n \) are the states, control inputs, and exogenous disturbances, respectively.  The exogenous disturbances \( v_t \) are assumed to be independent and identically distributed (i.i.d.), sampled from an arbitrary distribution \( \mathcal{D} \). Suppose that the covariance matrix of the disturbance is  $\Sigma \in \mathbb{S}^n_{++} $.

\subsection{Canonical discounted LQR}
For the system \eqref{Eq:linearsystem}, discounted LQR has acted as a proxy in RL to synthesize an optimal feedback controller $\pi: \mathbb{R}^n \rightarrow \mathbb{R}^m$ by minimizing the objective
\begin{align*}
V^{\pi}(x) = \mathbb{E} \left[ \sum_{t=0}^{\infty} \gamma^t \left( x_t^\top Q x_t + u_t^\top R u_t \right) \right], 
\end{align*}
where $u_t = \pi(x_t)$, $x_0 = x$, \( Q \in \mathbb{R}^{n \times n} \) and \( R  \in \mathbb{R}^{m \times m} \) are symmetric positive definite matrices, and $\gamma \in (0,1)$ is the  discount factor. 
For a linear stabilizing feedback policy $\pi(x_t) = Kx_t$, we define $V^{K}(x) = V^{\pi}(x)$, which satisfies the following Bellman equation:
\begin{align*}
V^K(x) = &x^\top (Q + K^\top R K) x \\
&+ \gamma \mathbb{E}\left[ V^K(x') \mid x' = (A + B K)x + v_0 \right].
\end{align*}
In particular, $V^K(x)$ admits a quadratic form $$V^K(x)=x^\top Px+q, \ q=\frac{\gamma}{1-\gamma} \operatorname{tr}(P \Sigma)$$
where  $P$ is the  solution to the Lyapunov equation 
\begin{align}\label{eq:lyap_P}
    P=Q+K^\top R K+\gamma A_K^\top P A_K, \ A_K=A+BK.
\end{align}

\subsection{Distributional LQR}
Motivated by the fact that distributional RL provides a comprehensive consideration of the return distribution, distributional LQR has been proposed by \cite{wang2023policy,11020758} to characterize the entire return distribution in linear quadratic control.
For the system \eqref{Eq:linearsystem} with state feedback control $u_t = Kx_t$, the random return from initial state $x$ is defined as
\begin{equation*}
    G^K(x) = \sum_{t=0}^{\infty} \gamma^t  x_t^\top (Q + K^\top R K) x_t, \ x_0=x. 
\end{equation*}
It has been shown in \cite{11020758} that, under the assumption that $K$ is stabilizing, the random return $G^K(x)$ can be analytically characterized as follows:
\begin{align}\label{Eq:Gkx}
      &G^K(x) = x^\top P x + 2 \sum_{k=0}^{\infty} \gamma^{k+1} w_k^\top P A_K^{k+1} x \\  \notag
&+ 
\sum_{k=0}^{\infty} \gamma^{k+1} w_k^\top P w_k + 2 \sum_{k=1}^{\infty} \gamma^{k+1} w_k^\top P \sum_{\tau=0}^{k-1} A_K^{k-\tau} w_\tau,
\end{align}
which is the solution to the random-variable Bellman equation \cite{bellemare2017distributional}:
\begin{align*}\label{eq:rv:Bellman}
    G^{K}(x) & \mathop{=}^{D} x^{\rm{T}}(Q+K^\top R K)x + \gamma G^{K}(x'), \
   x' = A_Kx+v_0.
\end{align*}
Here the auxiliary  random variables $w_k$ follow the same probability distribution as the exogenous disturbance $v_t$ in \eqref{Eq:linearsystem}, that is, $w_k\sim\mathcal{D}$. We assume that $w_k$'s are mutually independent for all $k\in\mathbb{N}$.

\subsection{Problem statement}

Although prior work \cite{wang2023policy,11020758} characterizes and approximates return distributions, leveraging these distributions efficiently for controller synthesis remains underexplored. This paper takes a step forward by designing controllers explicitly grounded in the return distribution. Specifically, we explore designing controllers based on the mean and variance of the random return to trade off the control performance and its variability. The first objective is to derive exact expressions for the return variance, denoted by $\operatorname{Var}\left[G^K(x)\right]$, for both general and specific Gaussian disturbances. 
Using these results, we develop an efficient and convergent algorithm to solve the mean-variance optimal control problem:  $\min_{K} J(K)=\mathbb{E}[G^{K}(x)]+\alpha \operatorname{Var}\left[G^K(x)\right]$,  where $\alpha\geq 0$ is the risk aversion coefficient.

%% file: Sec/variance.tex
\section{Variance of random return $G^K(x)$}
In this section, we provide the exact variance characterization $\operatorname{Var}\left[G^K(x)\right]$ of the random return $G^{K}(x)$. 

\subsection{Non-Gaussian exogenous disturbances}
We consider the class of exogenous disturbances that satisfy the following assumption.

\begin{assumption}\label{Assump:symmetric}
The auxiliary random variables $w_k$ in \eqref{Eq:Gkx} are i.i.d. with zero mean and covariance $\Sigma$, and possess a distribution that is symmetric about the origin\footnote{If $w$ admits a probability density function $p$, this is equivalently interpreted as
$p(w)=p(-w)$ for almost every $w\in\mathbb{R}^n$.
}~\citep{blitzstein_hwang_2019}.
\end{assumption}
\begin{remark}
    This assumption can be extended to the case where the disturbance  distribution is symmetric w.r.t a known point $\mu_w$. By translating the state by the induced equilibrium offset, the dynamics can be rewritten in terms of a disturbance symmetric w.r.t the origin.
\end{remark}
 Let us rewrite the random return as $ G^K(x)= x^\top Px+2\Psi_1+\Psi_2+2\Psi_3$, where 
\begin{eqnarray}\label{eq:simplify}
\begin{cases}
    \Psi_1= \sum_{k=0}^{\infty  } \gamma^{k+1} w_k^\top P A_K^{k+1} x,  \\
    \Psi_2  =\sum_{k=0}^{\infty} \gamma^{k+1} w_k^\top P w_k, \\
     \Psi_3 = \sum_{k=1}^{\infty} \gamma^{k+1} w_k^\top P \sum_{\tau=0}^{k-1} A_K^{k-\tau} w_\tau.
\end{cases}
\end{eqnarray}

\begin{proposition} \label{prop:zerocov}
Under Assumption~\ref{Assump:symmetric}, $\Psi_i$, $i=1,2,3$, are mutually orthogonal, i.e.,
$\mathbb{E}[\Psi_i \Psi_j]=0, \forall i\neq j$.
\end{proposition}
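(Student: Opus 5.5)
The plan is to expand each product $\Psi_i\Psi_j$ as a double series over the time indices and exploit two facts: independence of the $w_k$, and the symmetry $w_k \mathop{=}\limits^{D} -w_k$ from Assumption~\ref{Assump:symmetric}. Symmetry implies that every odd-order moment of a single $w_k$ vanishes whenever it exists. In particular, $\mathbb{E}[w_k]=0$ and $\mathbb{E}[(w_k^\top M w_k)\,w_k]=0$ for any fixed matrix $M$.

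First, I will justify exchanging expectation with the infinite sums. Since $K$ is stabilizing and $\gamma<1$, we have $\|\gamma^{k}A_K^{k}\|\le c\,\bar\rho^{k}$ for some $\bar\rho<1$, so the series defining $\Psi_1,\Psi_2,\Psi_3$ converge in $L^2$. This needs finite fourth moments for $\Psi_2^2$. I will assume these implicitly, since otherwise $\operatorname{Var}[G^K(x)]$ is not finite and the proposition is vacuous. With $L^2$ convergence, $\mathbb{E}[\Psi_i\Psi_j]$ is the limit of the expectations of the partial-sum products, and those are finite sums of term-wise expectations.

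Next, I treat each pair.

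\emph{$(\Psi_1,\Psi_2)$.} A typical term is $\mathbb{E}[w_k^\top P A_K^{k+1}x\; w_j^\top P w_j]$. If $j\neq k$, independence factors it, and the factor $\mathbb{E}[w_k]=0$ kills it. If $j=k$, the term is a third-order odd moment $\mathbb{E}[(w_k^\top P w_k)\, w_k]^\top P A_K^{k+1}x$, which vanishes by symmetry.

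\emph{$(\Psi_1,\Psi_3)$.} A typical term is $\mathbb{E}[w_j^\top a\; w_k^\top P A_K^{k-\tau}w_\tau]$ with $\tau<k$. Here $w_k$ and $w_\tau$ are distinct and appear linearly. Whatever $j$ is, at least one of $w_k,w_\tau$ appears exactly once (the one not equal to $j$), so factoring by independence leaves $\mathbb{E}[w]=0$.

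\emph{$(\Psi_2,\Psi_3)$.} A typical term is $\mathbb{E}[w_j^\top P w_j\; w_k^\top M w_\tau]$ with $\tau<k$. At most one of $k,\tau$ equals $j$, so the other index appears alone and linearly. Conditioning on (or factoring out) that variable gives zero. Alternatively, the whole term is odd under flipping the sign of that variable.

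The cleanest uniform argument is a sign-flip one: for a fixed index $\ell$, replacing $w_\ell$ by $-w_\ell$ leaves the joint law invariant by symmetry and independence. Each cross term is odd in some single $w_\ell$, hence has zero mean.

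The main obstacle is not the algebra but the interchange of limit and expectation, together with the implicit moment assumption. I would state that $\mathbb{E}\|w\|^4<\infty$ is needed. Then I would show absolute summability of the double series of expectations using Cauchy--Schwarz and the geometric decay $\gamma^{k}\|A_K^{k}\|$. For $\Psi_3$, the inner sum over $\tau$ adds a convolution factor. This is still geometrically summable, with a bound like $\sum_k (k+1)\bar\rho^{k}<\infty$. Dominated convergence then completes the argument.
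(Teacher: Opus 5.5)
Your proposal is correct and follows essentially the same route as the paper: expand each $\mathbb{E}[\Psi_i\Psi_j]$ term-wise, kill the cross-index terms by independence and $\mathbb{E}[w]=0$, and kill the one genuinely cubic term $\mathbb{E}[(w^\top P w)\,w]$ by symmetry (the paper's Lemma~\ref{lem:oddzero}). Your $L^2$-convergence justification for exchanging expectation with the infinite sums, together with the explicit finite-fourth-moment requirement, fills in details that the paper uses implicitly but never states.
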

\begin{proof}
    The proof is given in Appendix A. \hspace*{\fill} \qed
\end{proof}
To simplify the variance calculation, we introduce the auxiliary state sequence $\xi_k$, defined by
\begin{equation}\label{Eq:auxiliary_state}
    \xi_{k+1}=A_K\xi_k+w_k,\qquad \xi_0=x.
\end{equation} Its one-step  prediction state is defined by
\[
s_k:=\mathbb{E}[\xi_{k+1}\mid \xi_k]=A_K\xi_k=\xi_{k+1}-w_k. 
\]Note that $s_k$ is determined by $x_0=x$ and linear combinations of the  sequence $\{w_{i}\}_{i=0}^{k-1}$, and $s_k$ is independent of $w_k$.
Then $G^K(x)$ can be reconstructed by combining $\Psi_1$ and $\Psi_3$ as shown in the following proposition.
\begin{proposition}\label{prop:alternativeG}
   An equivalent expression of  $G^K(x)$  is
    \begin{equation*}
        G^K(x)=x^\top Px+ \sum_{k=0}^{\infty} \gamma^{k+1} w_k^\top P w_k +2 \sum_{k=0}^{\infty} \gamma^{k+1} w_k^\top P s_k.
    \end{equation*}
\end{proposition}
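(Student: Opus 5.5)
We need to show $2\Psi_1+2\Psi_3 = 2\sum_{k\ge0}\gamma^{k+1}w_k^\top P s_k$, since $\Psi_2$ already appears verbatim in the claimed expression. The plan is to obtain a closed form for $s_k$ by unrolling the recursion \eqref{Eq:auxiliary_state}, and then to match terms with $\Psi_1$ and $\Psi_3$ in \eqref{eq:simplify}.

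\textbf{Step 1: closed form for $\xi_k$ and $s_k$.} By induction on $k$, $\xi_k = A_K^k x + \sum_{\tau=0}^{k-1} A_K^{k-1-\tau} w_\tau$, with the empty sum for $k=0$. Hence
\begin{equation*}
s_k = A_K\xi_k = A_K^{k+1}x + \sum_{\tau=0}^{k-1} A_K^{k-\tau} w_\tau .
\end{equation*}

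\textbf{Step 2: substitution and term matching.} Substituting gives
\begin{equation*}
\sum_{k=0}^{\infty}\gamma^{k+1}w_k^\top P s_k = \sum_{k=0}^{\infty}\gamma^{k+1}w_k^\top P A_K^{k+1}x + \sum_{k=0}^{\infty}\gamma^{k+1}w_k^\top P\sum_{\tau=0}^{k-1}A_K^{k-\tau}w_\tau .
\end{equation*}
The first sum is exactly $\Psi_1$. In the second sum the $k=0$ term is empty, so it equals $\Psi_3$. Multiplying by $2$ and adding $x^\top P x+\Psi_2$ recovers \eqref{Eq:Gkx}, which is the claimed expression.

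\textbf{Step 3: splitting the series (the only delicate point).} Splitting one infinite series into two requires both pieces to converge in a suitable sense. I would argue convergence in $L^2$, or almost surely, using stability: since $K$ is stabilizing, $\rho(\sqrt{\gamma}A_K)<1$ holds (this stability is already implicit in \eqref{Eq:Gkx}). This gives geometric bounds $\|A_K^j\|\le c\,\beta^j$ with $\gamma\beta<1$ in the relevant regime, and $w_k$ has finite second moments. Therefore $\mathbb{E}|\gamma^{k+1}w_k^\top P A_K^{k+1}x|$ and the corresponding bound for the $\Psi_3$ summands are summable. Both series then converge absolutely in $L^1$, so the splitting is legitimate. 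Apart from this, the statement is a purely algebraic reindexing.
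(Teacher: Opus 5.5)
Your proposal is correct and follows the paper's proof essentially verbatim: unroll \eqref{Eq:auxiliary_state} to get $s_k=A_K^{k+1}x+\sum_{\tau=0}^{k-1}A_K^{k-\tau}w_\tau$, substitute, and recognize $\Psi_1+\Psi_3$ term by term. Your Step 3 is extra rigor the paper omits and it is sound, though not really delicate: \eqref{Eq:Gkx} already presupposes that $\Psi_1$ and $\Psi_3$ converge, so adding the two series term by term (using the empty $k=0$ term of $\Psi_3$) gives the claim directly by linearity of limits of partial sums.
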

\begin{proof}
  Write the auxiliary state 
   {$\xi_{k+1}$}  in \eqref{Eq:auxiliary_state} as ${\xi_{k+1}}=A_K^{k+1}x+ \sum_{\tau=0}^{k}A_K^{k-\tau} w_\tau$.
The definition of $s_k$ yields \(
    s_k=A_K\xi_{k}= A_K^{k+1}x+ \sum_{\tau=0}^{k-1}A_K^{k-\tau} w_\tau.\)
It follows that 
        \begin{align*}
      &G^K(x) = x^\top P x +\sum_{k=0}^{\infty} \gamma^{k+1} w_k^\top P w_k \\ 
&+  2 \sum_{k=0}^{\infty} \gamma^{k+1} w_k^\top P \Big( A_K^{k+1} x+\sum_{\tau=0}^{k-1} A_K^{k-\tau} w_\tau)\\
&=  x^\top P x +\sum_{k=0}^{\infty} \gamma^{k+1} w_k^\top P w_k+ 2 \sum_{k=0}^{\infty} \gamma^{k+1} w_k^\top P s_k. \text{\hspace*{\fill} \qed}
\end{align*}
\end{proof}
Given Proposition \ref{prop:zerocov} and Proposition \ref{prop:alternativeG}, we can characterize the exact variance of $G^K(x)$ in the following theorem. 
\begin{theorem} \label{Theorem:mainresult}  
Let $w$ denote a random variable with the same distribution as $w_k$ (i.e., $w\sim \mathcal{D}$). Under Assumption~\ref{Assump:symmetric}, 
  the exact variance $\operatorname{Var}[G^K(x)]$ of $G^K(x)$ is 
\begin{align}
        &\operatorname{Var}[G^K(x)]= \frac{\gamma^2}{1-\gamma^2} \mathbb{E}\Big[ \Big( 
      {w^\top Pw }\Big)^2\Big]-\frac{\gamma^2}{1-\gamma^2}\mathrm{tr}^2 \left[ {\left( P\Sigma \right)} \right] \nonumber\\
        &\hspace{2cm}+4\gamma^2 \operatorname{tr}(P\Sigma P Z), \label{eq:globalvariance}
    \end{align}
    where  $ Z\in \mathbb{S}^n_{++} $ is the unique solution to the following Lyapunov equation:
   \begin{equation}\label{eq:Pxsigma}
    Z-\gamma^2A_KZA_K^\top =A_K(xx^\top +\frac{\gamma^2}{1-\gamma^2 
    }\Sigma)A_K^\top.
\end{equation}
\end{theorem}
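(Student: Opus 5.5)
We use the decomposition of Proposition~\ref{prop:alternativeG}, writing $G^K(x)=x^\top Px+\Psi_2+2\Phi$ with $\Phi:=\sum_{k=0}^\infty\gamma^{k+1}w_k^\top P s_k$. Then
\[
\operatorname{Var}[G^K(x)]=\operatorname{Var}[\Psi_2]+4\operatorname{Var}[\Phi]+4\,\mathrm{Cov}(\Psi_2,\Phi).
\]
Since $\Phi=\Psi_1+\Psi_3$, Proposition~\ref{prop:zerocov} gives $\mathbb{E}[\Psi_2\Phi]=0$. Moreover $\mathbb{E}[\Phi]=0$, because each $w_k$ is zero mean and independent of $s_k$. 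Hence the covariance term vanishes and $\operatorname{Var}[\Phi]=\mathbb{E}[\Phi^2]$. It remains to compute the two pieces $\operatorname{Var}[\Psi_2]$ and $\mathbb{E}[\Phi^2]$ separately.

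For $\Psi_2$, the summands $\gamma^{k+1}w_k^\top Pw_k$ are independent. Each has mean $\gamma^{k+1}\mathrm{tr}(P\Sigma)$ and variance $\gamma^{2k+2}\big(\mathbb{E}[(w^\top Pw)^2]-\mathrm{tr}^2(P\Sigma)\big)$. Summing the geometric series $\sum_k\gamma^{2k+2}=\gamma^2/(1-\gamma^2)$ gives the first two terms of \eqref{eq:globalvariance}. Absolute convergence in $L^2$ holds provided $\mathbb{E}[(w^\top Pw)^2]<\infty$; we assume a finite fourth moment, which is implicit in the statement.

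For $\Phi$, expand $\mathbb{E}[\Phi^2]=\sum_{j,k}\gamma^{j+k+2}\mathbb{E}[w_j^\top Ps_j\, s_k^\top Pw_k]$.
\begin{itemize}
\item If $j<k$, then $w_k$ is independent of $(w_j,s_j,s_k)$, and the zero mean of $w_k$ kills the term.
\item If $j=k$, independence of $w_k$ and $s_k$ gives $\gamma^{2k+2}\mathrm{tr}(P\Sigma P\,\mathbb{E}[s_ks_k^\top])$.
\end{itemize}
Thus $\mathbb{E}[\Phi^2]=\gamma^2\mathrm{tr}(P\Sigma P Z)$ with $Z:=\sum_{k\ge0}\gamma^{2k}\mathbb{E}[s_ks_k^\top]$.

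Next compute $\mathbb{E}[s_ks_k^\top]=A_K M_k A_K^\top$, where $M_k=\mathbb{E}[\xi_k\xi_k^\top]$ satisfies $M_{k+1}=A_KM_kA_K^\top+\Sigma$ with $M_0=xx^\top$. Then
\[
Z=A_K\Big(\sum_k\gamma^{2k}M_k\Big)A_K^\top .
\]
Set $N=\sum_k\gamma^{2k}M_k$. Multiplying the recursion by $\gamma^{2k+2}$ and summing gives
\[
N-\gamma^2A_KNA_K^\top=xx^\top+\frac{\gamma^2}{1-\gamma^2}\Sigma .
\]
Applying $A_K(\cdot)A_K^\top$ to both sides yields exactly \eqref{eq:Pxsigma} for $Z=A_KNA_K^\top$.

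The main obstacle is justifying convergence and the interchange of expectation with the infinite double sum. We would use that $K$ is stabilizing in the discounted sense ($\rho(\sqrt\gamma A_K)<1$, which implies $\rho(\gamma A_K)<1$). Then $\mathbb{E}\|s_k\|^2$ grows at most like $\rho(A_K)^{2k}$ plus a bounded term. Consequently $\sum_k\gamma^{k+1}\|w_k^\top Ps_k\|_{L^2}<\infty$ is secured by checking $\gamma^2\rho(A_K)^2<1$, so the series converge in $L^2$ and the term-wise computations are legitimate.

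Uniqueness of $Z$ follows from $\rho(\gamma A_K)<1$, since the Lyapunov operator $Z\mapsto Z-\gamma^2A_KZA_K^\top$ is then invertible. Positive definiteness of $Z$ follows from its series representation: the right-hand side of \eqref{eq:Pxsigma} is at least $\frac{\gamma^2}{1-\gamma^2}A_K\Sigma A_K^\top$. This requires invertibility of $A_K$, or else we only claim $Z\in\mathbb{S}^n_+$. We flag that the stated strict positivity may need this caveat, though it does not affect the variance formula.
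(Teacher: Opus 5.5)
Your proposal is correct and follows essentially the same route as the paper's Appendix B. Both split $G^K(x)-x^\top Px$ orthogonally into $\Psi_2$ and $2(\Psi_1+\Psi_3)$ via Propositions~\ref{prop:zerocov}--\ref{prop:alternativeG}, keep only the diagonal terms in each second moment, and identify $\sum_k\gamma^{2k}\mathbb{E}[s_ks_k^\top]$ with $Z$ through a second-moment recursion; you recurse on $\mathbb{E}[\xi_k\xi_k^\top]$ and conjugate by $A_K$, whereas the paper recurses on $\mathbb{E}[s_ks_k^\top]$ directly, which is immaterial. Your $L^2$-convergence justification goes beyond what the paper provides, and your positivity caveat is well founded: since $Z=A_K N A_K^\top$, $Z$ is singular whenever $A_K$ is (e.g.\ $A_K=0$ gives $Z=0$), so in general only $Z\in\mathbb{S}^n_+$ holds, a point the paper's proof does not address and which does not affect the variance formula.
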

\begin{proof}
The proof is given in Appendix B. \hspace*{\fill} \qed
\end{proof}
Given the fourth-order moment $\mathbb{E}[\|w\|^4]$, Theorem~\ref{Theorem:mainresult} yields an immediate variance bound, as shown in the following corollary.
\begin{corollary}\label{theorem:bound}
    Suppose that  Assumption~\ref{Assump:symmetric} holds and  the random variable $w$ in \eqref{eq:globalvariance} satisfies $\mathbb{E}[\|w \|^4] = \phi^4<+\infty$. Then, the variance of random return  $G^K(x)$ satisfies
       \begin{equation}\label{eq:globalvariancebound}  
    \begin{aligned}
        &\operatorname{Var}[G^K(x)] \leq \frac{\gamma^2}{1-\gamma^2} \|P \|^2 \phi^4-\frac{\gamma^2}{1-\gamma^2}\mathrm{tr}^2 \left[ {\left( P\Sigma \right)} \right] \\
        &\hspace{2cm}+ 4\gamma^2\operatorname{tr}(P\Sigma P Z).
    \end{aligned}
    \end{equation}
\end{corollary}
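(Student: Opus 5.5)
The bound follows directly from Theorem~\ref{Theorem:mainresult}. The second and third terms on the right-hand side of \eqref{eq:globalvariance} already match those of \eqref{eq:globalvariancebound}, and the matrix $Z$ is the same solution of \eqref{eq:Pxsigma}. So it is enough to bound the fourth-moment term:
\begin{equation*}
\mathbb{E}\big[(w^\top P w)^2\big]\le \|P\|^2\,\mathbb{E}\big[\|w\|^4\big]=\|P\|^2\phi^4 .
\end{equation*}
Multiplying by the positive constant $\gamma^2/(1-\gamma^2)$, which is positive because $\gamma\in(0,1)$, then gives \eqref{eq:globalvariancebound}.

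To prove this moment bound, I first use that $P\in\mathbb{S}^n_{++}$ for a stabilizing $K$. This holds because $P$ solves the Lyapunov equation \eqref{eq:lyap_P} with right-hand side $Q+K^\top RK\succ 0$, and $\rho(\sqrt{\gamma}A_K)<1$. For symmetric $P$, the Rayleigh quotient gives
\begin{equation*}
0\le w^\top P w\le \lambda_{\max}(P)\|w\|^2=\|P\|\,\|w\|^2
\end{equation*}
pointwise, for every realization of $w$. Since both sides are nonnegative, squaring preserves the inequality, so $(w^\top Pw)^2\le \|P\|^2\|w\|^4$ almost surely. Taking expectations, which is justified by monotonicity of expectation and $\phi^4<\infty$, yields the claimed bound. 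It also shows that the fourth-moment term in \eqref{eq:globalvariance} is finite.

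There is no real obstacle here. The only point needing care is that squaring the pointwise inequality requires both sides to be nonnegative. If one prefers not to rely on positive definiteness of $P$, the bound $|w^\top P w|\le \|P\|\|w\|^2$ for any symmetric $P$ gives the same conclusion, so this point is easy to handle.
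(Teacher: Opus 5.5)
Your proposal is correct and follows the paper's proof: you invoke Theorem~\ref{Theorem:mainresult} and bound the fourth-moment term by $\mathbb{E}[(w^\top P w)^2]\le \|P\|^2\,\mathbb{E}[\|w\|^4]=\|P\|^2\phi^4$. Your justification of the pointwise inequality via $|w^\top P w|\le \|P\|\,\|w\|^2$ fills in a step that the paper states without comment.
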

\begin{proof}
Combining
Theorem~\ref{Theorem:mainresult} and the inequality \(
\mathbb{E} \left[ \left(w^\top P w\right)^2 \right] \leq \mathbb{E}[\| P \|^2 \| w\|^4]=\| P \|^2\phi^4 \)
yields the variance bound in \eqref{eq:globalvariancebound}. \hspace*{\fill} \qed
\end{proof}

\begin{table*}[ht]
\centering
\caption{Comparison of empirical variance and theoretical bounds under different noise types}
\begin{tabular}{lccccccc}
\toprule
\textbf{Noise Type} & \(\phi^4\) & Monte Carlo Variance & Tight Bound &  Norm-Based Bound & Tight/MC  & Norm/MC \\
\midrule
Gaussian      & 139.00     & 1168.50  & 1264.16  & 90084.24 & 1.08 & 77.07 \\
Uniform       & 104.20    & 501.97  & 568.95  & 64860.73 & 1.13 & 129.24 \\
Laplace        & 226.00  & 2841.86  & 3002.19  & 153102.96 & 1.06 & 53.89 \\
Student-\(t\)($\nu=8$) & 208.50 & 2532.43  & 2652.59 & 140430.09 & 1.05 & 55.45 \\
\bottomrule
\end{tabular}
\label{tab:bound_comparison}
\vspace{1mm}
\begin{minipage}{\textwidth}
\small
\textit{Note}: For Gaussian, Uniform, Laplace and Student-t noise, the fourth-order moment \(\phi^4\) is available in closed form.
The last two columns show the ratio between each theoretical bound and the empirical variance.  
The norm-based bound from~\cite{11020758} is originally stated for the second moment \(\mathbb{E}[G^K(x)^2]\); to enable a fair comparison, we subtract \(\mathbb{E}[G^K(x)]^2\) from it.
\end{minipage}
\end{table*} 

We give the  following example to validate the tightness of the above variance bound, compared to the norm-based bound proposed in~\cite{11020758}. 
\begin{example}
To illustrate the tightness of the bound, we introduce a numerical example.
     We consider an idealized data center cooling system    (\cite{9691800}, \cite{dean2020sample}) with the dynamics $x_{t+1}=Ax_t+Bu_t+v_t$, where 
   $A = [
1.01 \ 0.01 \ 0;
0.01 \ 1.01 \ 0.01;
0  \ 0.01 \ 1.01]$ and
$B = I$.
We select $Q=2I$ and $R=I$. Let the initial state $x_0=x= [3,3,3]^\top$, the discount factor $\gamma= 0.8$, and the feedback controller 
$K=-0.02\left[\begin{smallmatrix}
56.19 & 0.7692 & 0.0027\\
0.7692 & 56.20 & 0.7692\\
0.0027 & 0.7692 & 56.19
\end{smallmatrix}\right]$
  We consider four representative types of noise: Gaussian, Uniform, Laplace, and Student-t  distributions, all satisfying Assumption \ref{Assump:symmetric}. The covariance matrices $\Sigma
$ of the four types of noise are set to be identical to $\operatorname{diag}(2,3,4)$.

A quantitative comparison (Table \ref{tab:bound_comparison}) is then made between three quantities to prove the tightness of our bound, compared to the norm-based bound in \cite{11020758} and the empirical variance by Monte Carlo simulations.
From Table \ref{tab:bound_comparison}, compared to the norm-based bound proposed in~\cite{11020758}, the bound in Corollary~\ref{theorem:bound} is significantly tighter. In particular, the ratio between the theoretical bound and the Monte Carlo variance remains consistently below 1.2, indicating that the proposed bound provides a highly accurate characterization of the variance.
This highly improved tightness mainly results from a more accurate treatment of all terms during the bounding process. Instead of applying norm-based relaxations uniformly, the analysis computes most terms without approximations and only applies a loose bound to the fourth-moment-related part. 

In the above numerical example, we consider a controller $K$ such that $\rho(A_K)<1$ and $\lVert A_K\rVert<1$, which is required by the norm-based bound in \cite{11020758}. This requirement stems from the fact that the upper bound in \cite{11020758} depends on 
$\frac{\lVert A_K\rVert^2}{1-\lVert A_K\rVert^2}$,
which is meaningful only when $\lVert A_K\rVert<1$. In contrast, the bound of Corollary~\ref{theorem:bound} remains valid with $\lVert A_K\rVert>1$. 
Here we present a simple numerical example in which 
$K=\left[\begin{smallmatrix}
    &-0.51  &2.99  &0 \\      &-0.01   &-0.51  & -0.01 \\ &0     &-0.01    &-0.51
\end{smallmatrix}\right]$.
Then $\rho(A_K)=0.5<1$, $\lVert A_K \rVert=3.08>1$. We consider the case when $w$ admits a zero-mean Uniform distribution. The empirical variance is $2.09\times 10^{5}$, while our bound is $5.11\times 10^{5}$ and remains valid. The bound-to-variance ratio is 2.46, slightly higher than that in Table~\ref{tab:bound_comparison}.
\end{example}

\subsection{Gaussian exogenous disturbances}
For the Gaussian exogenous disturbances, we can further refine the variance in Theorem~\ref{Theorem:mainresult}.
In this case,  the exogenous disturbances $v_t$ and auxiliary random variables $w_k$  are Gaussian with zero mean and covariance matrix $\Sigma$. 
We begin with the following lemma.

\begin{lemma}\label{Lem:fourmoment}
Let  $w$ be a  zero-mean Gaussian random vector with covariance matrix $\Sigma$. The following holds
\begin{equation*}
     \mathbb{E}\left[ {\left( w^\top P w\right)}^2\right]=2\operatorname{tr}\left( P\Sigma P \Sigma\right)+ \left[ \operatorname{tr}\left( P\Sigma
     \right)\right]^2.
\end{equation*}
\end{lemma}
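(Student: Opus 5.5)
We reduce to a standard Gaussian vector and then use Isserlis' (Wick's) theorem. Here $P$ is symmetric, since it solves the Lyapunov equation \eqref{eq:lyap_P}, and $\Sigma\in\mathbb{S}^n_{++}$.

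\emph{Step 1: whitening.} Write $w=\Sigma^{1/2}z$ with $z\sim\mathcal{N}(0,I)$. Then
\[
w^\top P w = z^\top M z, \qquad M:=\Sigma^{1/2}P\Sigma^{1/2}\in\mathbb{S}^n.
\]

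\emph{Step 2: diagonalization.} Diagonalize $M=U\Lambda U^\top$ with $U$ orthogonal. Rotational invariance gives $y:=U^\top z\sim\mathcal{N}(0,I)$, so
\[
z^\top M z=\sum_i \lambda_i y_i^2 ,
\]
where the $y_i$ are independent and satisfy $\mathbb{E}[y_i^2]=1$ and $\mathbb{E}[y_i^4]=3$.

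\emph{Step 3: computing the second moment.} Expand the square:
\begin{align*}
\mathbb{E}\Big[\Big(\sum_i\lambda_i y_i^2\Big)^2\Big]
&=\sum_i 3\lambda_i^2+\sum_{i\neq j}\lambda_i\lambda_j\\
&=2\sum_i\lambda_i^2+\Big(\sum_i\lambda_i\Big)^2\\
&=2\operatorname{tr}(M^2)+\operatorname{tr}^2(M).
\end{align*}

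\emph{Step 4: translating back.} By cyclicity of the trace, $\operatorname{tr}(M)=\operatorname{tr}(P\Sigma)$ and $\operatorname{tr}(M^2)=\operatorname{tr}(\Sigma^{1/2}P\Sigma P\Sigma^{1/2})=\operatorname{tr}(P\Sigma P\Sigma)$. Substituting these into Step 3 gives the claimed identity.

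The argument has no real obstacle. The only points to check are that $P$ is symmetric, so that $M$ is orthogonally diagonalizable, and the Gaussian fourth-moment value $\mathbb{E}[y_i^4]=3$.

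An equivalent route is to write $\mathbb{E}[(w^\top Pw)^2]=\sum P_{ij}P_{kl}\mathbb{E}[w_iw_jw_kw_l]$ and apply Isserlis' theorem:
\[
\mathbb{E}[w_iw_jw_kw_l]=\Sigma_{ij}\Sigma_{kl}+\Sigma_{ik}\Sigma_{jl}+\Sigma_{il}\Sigma_{jk}.
\]
The three pairings produce $\operatorname{tr}^2(P\Sigma)$ and two copies of $\operatorname{tr}(P\Sigma P\Sigma)$.
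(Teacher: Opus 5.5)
Your proof is correct, but your main argument takes a different route from the paper's. The paper never whitens or diagonalizes. It expands $\mathbb{E}[(w^\top P w)^2]=\sum_{i,j,k,l}P_{ij}P_{kl}\,\mathbb{E}\big[[w]_i[w]_j[w]_k[w]_l\big]$, applies Isserlis' theorem to the fourth moment, and sums the three pairings into $\operatorname{tr}^2(P\Sigma)+2\operatorname{tr}(P\Sigma P\Sigma)$. That is exactly the ``equivalent route'' you sketch at the end.

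Your spectral argument writes $w^\top P w=\sum_i\lambda_i y_i^2$, a weighted sum of independent $\chi^2_1$ variables whose weights are the eigenvalues of $\Sigma^{1/2}P\Sigma^{1/2}$. The only Gaussian input is then the scalar fact $\mathbb{E}[y^4]=3$; your opening sentence invokes Isserlis, but Steps 1--4 never need the multivariate version. The structure it exposes would also yield higher moments or cumulants with no extra work.

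The paper's index computation avoids matrix square roots and eigendecompositions and is purely combinatorial. However, it depends on the multivariate Isserlis formula, and its last bookkeeping step silently uses $P=P^\top$. For general $P$, the $\Sigma_{ik}\Sigma_{jl}$ pairing produces $\operatorname{tr}(P\Sigma P^\top\Sigma)$, not $\operatorname{tr}(P\Sigma P\Sigma)$. Your version makes the symmetry requirement explicit, since you need it to diagonalize $M$ orthogonally. That is the cleaner treatment, given that the lemma as stated does not say $P$ is symmetric.
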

\begin{proof}
We denote the $i^{th}$ entry of the vector $w$ by $[w]_i$. Then, it follows that 
    \begin{align*}
        &\mathbb{E}\left[ {\left( w^\top P w\right)}^2\right]=\mathbb{E}\left[ {\left( \sum_{i,j}[w]_i P_{ij}[w]_j\right)}^2\right] \\&=\mathbb{E}\left[\sum_{i,j,k,l} P_{ij} P_{kl} [w]_i [w]_j [w]_k [w]_l\right] \\& =\sum_{i,j,k,l} P_{ij} P_{kl}\mathbb{E}\Big[ [w]_i [w]_j [w]_k [w]_l \Big].
    \end{align*}
By Isserlis's theorem \citep{isserlis1918formula}, it follows that 
\begin{align*} \label{eq:modernformIsserlis}
               & \mathbb{E}\Big[ [w]_i [w]_j [w]_k [w]_l \Big]= \mathbb{E}\Big[ [w]_i [w]_j\Big]\mathbb{E}\Big[ [w]_k [w]_l \Big]\\&+ \mathbb{E}\Big[ [w]_i [w]_k\Big]\mathbb{E}\Big[ [w]_j [w]_l \Big] + \mathbb{E}\Big[ [w]_i [w]_l\Big]\mathbb{E}\Big[ [w]_j [w]_k \Big]\\& =\Sigma_{ij}\cdot \Sigma_{kl}+ \Sigma_{ik}\cdot \Sigma_{jl}+\Sigma_{il}\cdot \Sigma_{jk}.
\end{align*}
This further implies that 
  \begin{align*}
&\sum_{i,j,k,l} P_{ij} P_{kl} \notag \mathbb{E}\Big [w]_i [w]_j [w]_k [w]_l \Big] 
\\ \notag &= \sum_{i,j,k,l} P_{ij} P_{kl} \left(\Sigma_{ij} \Sigma_{kl} + \Sigma_{ik}  \Sigma_{jl} + \Sigma_{il}  \Sigma_{jk} \right) \notag \notag \\ \notag
&= {\left( \sum_{i,j} P_{ij} \Sigma_{ij} \right)}^2 
+ \sum_{i,k} \sum_{j,l} P_{ij} P_{kl} \Sigma_{ik} \Sigma_{jl} \\& \notag
+ \sum_{i,l} \sum_{j,k} P_{ij} P_{kl} \Sigma_{il} \Sigma_{jk} \notag = \left[ \operatorname{tr} \left( P \Sigma \right) \right]^2+2 \operatorname{tr} \left( P \Sigma P \Sigma \right).  \text{\hspace*{\fill} \qed}
\end{align*}
\end{proof}
Following Lemma~\ref{Lem:fourmoment},  the next theorem gives an immediate refinement of $\operatorname{Var}\left[G^K(x)\right]$ in the  Gaussian case.  
\begin{theorem}
\label{theorem:variance}
Assume that the auxiliary random variables $w_k$ are i.i.d.\ zero-mean Gaussian random vectors with covariance matrix $\Sigma$.
 Then the exact variance of the random return  $G^K(x)$ is 
    \begin{equation*} \label{eq:variancegaussian}
\begin{aligned}&\operatorname{Var}\left[G^K(x)\right]=\frac{2 \gamma^2}{1 - \gamma^2} \mathrm{tr}(P \Sigma P \Sigma)+ 4\gamma^2\operatorname{tr}(P\Sigma P Z).
        \end{aligned}
\end{equation*}
\end{theorem}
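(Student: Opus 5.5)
The plan is to specialize Theorem~\ref{Theorem:mainresult} to the Gaussian case and simplify its fourth-moment term using Lemma~\ref{Lem:fourmoment}.

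First, we check that Theorem~\ref{Theorem:mainresult} applies. A zero-mean Gaussian vector $w\sim\mathcal{N}(0,\Sigma)$ with $\Sigma\in\mathbb{S}^n_{++}$ has density
\[
p(w)\propto\exp\!\big(-\tfrac12 w^\top\Sigma^{-1}w\big),
\]
which satisfies $p(w)=p(-w)$. Since the $w_k$ are i.i.d. with zero mean and covariance $\Sigma$ by hypothesis, Assumption~\ref{Assump:symmetric} holds. Theorem~\ref{Theorem:mainresult} therefore gives \eqref{eq:globalvariance}, with $Z$ the solution of \eqref{eq:Pxsigma}.

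Second, we substitute the fourth moment. Lemma~\ref{Lem:fourmoment} gives
\[
\mathbb{E}[(w^\top Pw)^2]=2\operatorname{tr}(P\Sigma P\Sigma)+\operatorname{tr}^2(P\Sigma).
\]
Inserting this into the first term of \eqref{eq:globalvariance} produces
\[
\frac{\gamma^2}{1-\gamma^2}\Big(2\operatorname{tr}(P\Sigma P\Sigma)+\operatorname{tr}^2(P\Sigma)\Big).
\]
The $\operatorname{tr}^2(P\Sigma)$ part cancels exactly against the second term $-\frac{\gamma^2}{1-\gamma^2}\operatorname{tr}^2(P\Sigma)$. What remains is
\[
\frac{2\gamma^2}{1-\gamma^2}\operatorname{tr}(P\Sigma P\Sigma)+4\gamma^2\operatorname{tr}(P\Sigma PZ),
\]
which is the claimed expression.

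There is no real obstacle. The only points to confirm are that the Gaussian density is symmetric about the origin and has finite fourth moments, so that all terms are well defined. The $Z$ term carries over unchanged from Theorem~\ref{Theorem:mainresult}, since it depends on the disturbance only through $\Sigma$.
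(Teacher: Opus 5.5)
Your proposal is correct and matches the paper's proof: substitute the Gaussian fourth-moment identity of Lemma~\ref{Lem:fourmoment} into Theorem~\ref{Theorem:mainresult} and cancel the $\operatorname{tr}^2(P\Sigma)$ terms. You also check explicitly that the Gaussian density is symmetric about the origin, so Assumption~\ref{Assump:symmetric} holds; the paper leaves this step implicit.
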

\begin{proof}
    When the random variables $w_k$ are i.i.d.  zero-mean Gaussian with covariance matrix $\Sigma$, Lemma \ref{Lem:fourmoment} implies that $\mathbb{E}[({w^\top P w})^2]=2\operatorname{tr}\left( P\Sigma P \Sigma\right)+ \left[ \operatorname{tr}\left( P\Sigma
     \right)\right]^2$. Therefore, it follows from Theorem~\ref{Theorem:mainresult} that 
         \begin{align*}
              &\operatorname{Var}[G^K(x)]=  \frac{\gamma^2}{1-\gamma^2}\Big[\operatorname{tr}^2 \left( P \Sigma \right) +2 \operatorname{tr} \left( P \Sigma P \Sigma \right)\Big]\\&-\frac{\gamma^2}{1-\gamma^2}\mathrm{tr}^2 \left[ {\left( P\Sigma \right)} \right] +4\gamma^2\operatorname{tr} (P \Sigma P Z)\\& =\frac{2 \gamma^2}{1 - \gamma^2} \mathrm{tr}(P \Sigma P \Sigma)+ 4\gamma^2\operatorname{tr} (P \Sigma P Z).  \tag*{$\qed$} 
         \end{align*}
\end{proof}
\begin{remark}
We give a brief interpretation of $\operatorname{Var}[G^K(x)]$. 
Recall the two terms in $\operatorname{Var}[G^K(x)]$:
\begin{align*}
    \frac{2\gamma^2}{1-\gamma^2}\operatorname{tr}(P\Sigma P\Sigma)
=
\operatorname{Var}\!\left[
\sum_{k=0}^{\infty}\gamma^{k+1}w_k^\top P w_k
\right], \\
4\gamma^2\operatorname{tr}(P\Sigma PZ)
=
\operatorname{Var}\!\left[
\sum_{k=0}^{\infty}2\gamma^{k+1}w_k^\top P s_k
\right].
\end{align*}
Since the auxiliary process $\xi_{k+1}=A_K\xi_k+w_k$ and the system dynamics $x_{t+1}=A_Kx_t+v_t$ share  the same distributions under $w_k\sim v_t$, $s_k:=\mathbb{E}[\xi_{k+1}\mid \xi_k]$
has physical counterparts. Letting
$\hat x_{t+1|t}:=\mathbb E[x_{t+1}\mid x_t]=A_Kx_t$ gives
\begin{align*}
    \sum_{k=0}^{\infty} \gamma^{k+1} w_k^\top P w_k &\stackrel{D}{=} \sum_{t=0}^{\infty} \gamma^{t+1} v_t^\top P v_t,  \\
    \sum_{k=0}^{\infty}2\gamma^{k+1}w_k^\top P s_k
&\stackrel{D}{=}
\sum_{t=0}^{\infty}2\gamma^{t+1}v_t^\top P\hat x_{t+1|t}.
\end{align*}
Thus, the first term corresponds to the quadratic disturbance component related to  the fourth-order moment of the Gaussian disturbance,
whereas the second term quantifies the variance of the infinite-horizon interaction between the noises and the one-step prediction states.
\end{remark}

%% file: Sec/optimalcontrol.tex
\section{Mean-variance optimal control}
Leveraging the variance expression when the noise admits a Gaussian distribution, in this section we formulate the following  mean-variance optimal control problem
\begin{subequations}\label{eq:optform}
\begin{align}
\min_{K}\quad 
& \mathbb{E}\!\left[G^{K}(x)\right]
+ \alpha\, \operatorname{Var}\!\left[G^{K}(x)\right]
\label{eq:optform:obj}\\
\text{s.t.}\quad 
& \rho(A_{K}) < 1
\label{eq:optform:stab}\\
& P - \gamma A_{K}^{\top} P A_{K}
= Q + K^{\top} R K
\label{eq:P_lyap}\\
& Z - \gamma^{2} A_{K} Z A_{K}^{\top}
= A_{K}\!\left(xx^{\top} + \frac{\gamma^{2}}{1-\gamma^{2}}\,\Sigma\right)\!A_{K}^{\top}
\label{eq:Z_lyap}
\end{align}
\end{subequations}
where $\alpha \geq 0 $ is the risk aversion coefficient. Denote by $K^*(\alpha)$ the idealized optimal solution to the original optimization problem~\eqref{eq:optform}.
\begin{remark}
When $\alpha=0$, the proposed original mean-variance formulation reduces exactly to the standard risk-neutral stochastic LQR introduced in Section~2.1. In this case, the optimization problem becomes
$\min_{K}\, \mathbb{E}[G^{K}(x)]$,
which corresponds to the standard risk-neutral stochastic LQR problem. Denote its optimal feedback gain by
$$K_{DARE} = - (R + \gamma B^\top P_{DARE} B)^{-1} \gamma B^\top P_{DARE} A,$$ where
$P_{DARE}\in \mathbb{R}^{n\times n}$ is the solution to the discounted algebraic Riccati equation: 
\begin{equation*}
    \begin{aligned}
      & P_{DARE} = Q + \gamma A^\top P_{DARE} A -  \\&\gamma A^\top P_{DARE} B (R + \gamma B^\top P_{DARE} B)^{-1} B^\top P_{DARE} A. 
    \end{aligned}
\end{equation*}
Equivalently, the optimal solution of the proposed formulation at $\alpha=0$ satisfies $K^*(0)=K_{\mathrm{DARE}}$.
\end{remark}

Note that the mean-variance optimal control is a non-convex problem with both equality and inequality constraints. It is hard to derive a computationally tractable reformulation due to the coupling of two Lyapunov equations. Instead, we develop an \textbf{adjoint gradient descent} method  for efficiently computing the gradients.
Let $\Dstab :=\; \bigl\{K\in\R^{m\times n} \;\big|\;
                  \rho(A_K)<1\bigr\}$ be
the set of  stabilizing linear controllers. For simplicity of notation, we define the objective function
$$
J(K,P,Z) = x^T P x + b \operatorname{tr}(P\Sigma) + c \operatorname{tr}(P\Sigma P\Sigma) + d \operatorname{tr}(P \Sigma P Z),
$$
where   $b=\frac{\gamma}{1-\gamma}, \  
    c =\frac{2\alpha\gamma^2}{1-\gamma^2}$, and $d =4\alpha\gamma^2$. 
We remark that the objective function $J$
 is an explicit function of 
$P$ and $Z$, which in turn depend on $K$
 through the Lyapunov equations. Therefore, $J$
is implicitly a function of $K$
alone, and we write $J(K)$
interchangeably with $J(K,P,Z)$
throughout.

Define the Lagrangian
\begin{align*}
    &\mathcal{L}(K,P,Z,\Lambda_P,\Lambda_Z) \nonumber\\
    &= J(K,P,Z)+\operatorname{tr}\!\bigl(\Lambda_P^{\top} F_P(K,P)\bigr)
       + \operatorname{tr}\!\bigl(\Lambda_Z^{\top} F_Z(K,Z)\bigr), 
\end{align*}
where $\Lambda_P$ and $\Lambda_Z$ are Lagrange multipliers, and 
\begin{align*}
    F_P(K,P) &= P - Q - K^{\top}RK
               - \gamma A_K^{\top}P A_K, \\
    F_Z(K,Z) &= Z - \gamma^2 A_K(Z+M)A_K^{\top},
\end{align*}
with $M=\frac{1}{\gamma^2}xx^\top +\frac{1}{1-\gamma^2}\Sigma$.
To evaluate the gradient of the objective function with respect to the controller gain $K$, we employ the adjoint method. The first step is to determine the sensitivity of the explicit objective function $J(K,P,Z)$ with respect to the state matrices $P$ and $Z$.  
It directly follows that  
\begin{align}
    \frac{\partial J}{\partial P} &= \frac{\partial }{\partial P} \operatorname{tr}(P xx^T) +  b \frac{\partial }{\partial P} \operatorname{tr}(P\Sigma) \nonumber\\
    & + c \frac{\partial }{\partial P} \operatorname{tr}(P\Sigma P\Sigma) + d \frac{\partial }{\partial P} \operatorname{tr}(P \Sigma P Z) \nonumber \\
    &= x x^{\top} + b\Sigma + 2c\,\Sigma P\Sigma
       + d\bigl(\Sigma P Z + Z P\Sigma\bigr), \label{eq:dJdP}  \\
        \frac{\partial J}{\partial Z}
    &= d\frac{\partial }{\partial Z} \operatorname{tr}(P \Sigma P Z)  = d\,P\Sigma P.  \label{eq:dJdZ}
  \end{align}With the established partial derivatives, the core principle of the adjoint method dictates that we choose the Lagrange multipliers, $\Lambda_P$ and $\Lambda_Z$, to render the Lagrangian stationary with respect to the state variables $P$ and $Z$. By enforcing $\partial \mathcal{L}/ \partial P = 0$ and $\partial \mathcal{L}/ \partial Z = 0$, we eliminate the need to compute the complex implicit sensitivities of $P$ and $Z$ with respect to $K$. This condition leads  to the adjoint equations in the following lemma.

\begin{lemma}\label{lem:adjoint} 
For any $K\in \Dstab$, setting $\partial\mathcal{L}/\partial P = 0$ and
$\partial\mathcal{L}/\partial Z = 0$ yields the 
adjoint equations:
\begin{align}
    \Lambda_P - \gamma\,A_K\,\Lambda_P\,A_K^{\top}
    &= -\frac{\partial J}{\partial P},
    \label{eq:adj_P}\\[4pt]
    \Lambda_Z - \gamma^2\,A_K^{\top}\Lambda_Z\,A_K
    &= -\frac{\partial J}{\partial Z}.
    \label{eq:adj_Z}
\end{align}
Both equations have unique solutions $\Lambda_P,\Lambda_Z$
on $\Dstab$.
\end{lemma}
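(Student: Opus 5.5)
Our plan is to differentiate the Lagrangian directly with respect to $P$ and $Z$, using the trace-derivative identities $\partial_X \operatorname{tr}(\Lambda^\top X)=\Lambda$ and $\partial_X \operatorname{tr}(\Lambda^\top C X D)=C^\top \Lambda D^\top$. We then argue existence and uniqueness from the spectral properties of the associated Stein operators on $\Dstab$.

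\emph{Derivation of \eqref{eq:adj_P}.} The only $P$-dependence in the constraint term is $\operatorname{tr}(\Lambda_P^\top P) - \gamma \operatorname{tr}(\Lambda_P^\top A_K^\top P A_K)$. Its derivative with respect to $P$ is $\Lambda_P - \gamma A_K \Lambda_P A_K^\top$. The term $F_Z$ does not depend on $P$. Setting $\partial\mathcal{L}/\partial P = \partial J/\partial P + \Lambda_P - \gamma A_K\Lambda_P A_K^\top = 0$ gives \eqref{eq:adj_P}, with $\partial J/\partial P$ taken from \eqref{eq:dJdP}.

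\emph{Derivation of \eqref{eq:adj_Z}.} Similarly, $\operatorname{tr}(\Lambda_Z^\top Z) - \gamma^2\operatorname{tr}(\Lambda_Z^\top A_K Z A_K^\top)$ has derivative $\Lambda_Z - \gamma^2 A_K^\top \Lambda_Z A_K$. The term $M$ does not depend on $Z$. Combining with \eqref{eq:dJdZ} yields \eqref{eq:adj_Z}. One subtlety is whether to treat $P,Z$ as symmetric or unrestricted matrices. We will treat them as unrestricted. The right-hand sides $\partial J/\partial P$ and $\partial J/\partial Z$ are symmetric, since $\Sigma P Z + Z P \Sigma$ is symmetric. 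The resulting solutions will therefore be symmetric, so nothing changes.

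\emph{Unique solvability.} This is the main point requiring care, though it is standard. The linear operator $\mathcal{T}_1(\Lambda)=\Lambda-\gamma A_K\Lambda A_K^\top$ on $\mathbb{R}^{n\times n}$ is represented by $I-\gamma (A_K\otimes A_K)$. Its eigenvalues are $1-\gamma\lambda_i\lambda_j$, where the $\lambda_i$ are eigenvalues of $A_K$. For $K\in\Dstab$ we have $|\gamma\lambda_i\lambda_j|\le \gamma\rho(A_K)^2<1$, so $\mathcal{T}_1$ is invertible. Equivalently, $\Lambda_P=-\sum_{k\ge0}\gamma^k A_K^k(\partial J/\partial P)(A_K^\top)^k$, and this series converges absolutely because $\rho(\sqrt{\gamma}A_K)<1$. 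The same argument applies to $\mathcal{T}_2(\Lambda)=\Lambda-\gamma^2A_K^\top\Lambda A_K$, whose eigenvalues are $1-\gamma^2\lambda_i\lambda_j\neq0$. It gives $\Lambda_Z=-\sum_{k\ge0}\gamma^{2k}(A_K^\top)^k d\,P\Sigma P A_K^k$.

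\emph{Well-definedness of the data.} We also note that $P$ and $Z$ themselves exist uniquely on $\Dstab$ by the same argument, so the right-hand sides are well defined. The series representations further show that $\Lambda_P,\Lambda_Z$ are symmetric and negative semidefinite, since $\partial J/\partial P$ and $\partial J/\partial Z$ are positive semidefinite when $\alpha\ge0$. This is a useful byproduct but not needed for the statement.
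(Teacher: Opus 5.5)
Your derivation of both adjoint equations and your Kronecker-spectrum argument for unique solvability are correct and follow the paper's proof essentially step for step: the paper also differentiates the Lagrangian and invokes $\rho(\gamma A_K\otimes A_K)<1$ and $\rho(\gamma^2 A_K\otimes A_K)<1$. One caution on your closing aside, which the statement does not need: $\partial J/\partial P$ contains the symmetrized term $d(\Sigma P Z+ZP\Sigma)$, which need not be positive semidefinite even though $P,Z,\Sigma\succ 0$ and can make $\partial J/\partial P$ indefinite, so $\Lambda_P\preceq 0$ does not follow, whereas $\Lambda_Z\preceq 0$ does follow because $dP\Sigma P\succeq 0$.
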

\begin{proof}
Setting $\partial\mathcal{L}/\partial P = 0$ yields that $\frac{\partial J}{\partial P}+\frac{\partial}{\partial P}\,\tr(\Lambda_P^{\top}F_P)=0$, which gives
\begin{align*}
& \frac{\partial J}{\partial P}+\frac{\partial}{\partial P}\,\tr(\Lambda_P^{\top}(P - Q - K^{\top}RK
               - \gamma A_K^{\top}P A_K))=0,        
\end{align*}
and thus 
    $\Lambda_P - \gamma\,A_K\,\Lambda_P\,A_K^{\top}
    = -\frac{\partial J}{\partial P}$. 
Similarly, one can get \eqref{eq:adj_Z}.
For any  \(K\in\mathcal D_{\rm stab}\), we have
\(
\rho(\gamma A_K\otimes A_K)<1 \),
\(\rho(\gamma^2 A_K\otimes A_K)<1.
\)
Therefore, the corresponding Lyapunov operators are nonsingular,
and both equations \eqref{eq:adj_P}--\eqref{eq:adj_Z} admit unique solutions on
\(\mathcal D_{\rm stab}\). 
\hspace*{\fill} \qed
\end{proof}
\begin{remark}
Throughout this paper, the Lyapunov-type equations are evaluated
for stabilizing feedback gains \(K\in\mathcal D_{\rm stab}\), where
\(\rho(A_K)<1\). Hence, for \(a\in\{\gamma^2,\gamma,1\}\),
the corresponding Lyapunov operators are non-singular since
\(
\rho(a A_K\otimes A_K)=a\rho(A_K)^2<1 .
\)
Therefore, the primal equations for \(P\) and \(Z\), the adjoint
equations for \(\Lambda_P\) and \(\Lambda_Z\), and the
barrier-related equations introduced below are all uniquely
solvable on \(\mathcal D_{\rm stab}\).
\end{remark}
Since the adjoint variables $\Lambda_P$ and $\Lambda_Z$ are specifically chosen to satisfy the stationary conditions, the implicit dependencies of $P$ and $Z$ are perfectly decoupled in the differential. Thus the total gradient of $J$ with respect to $K$ simplifies to the partial derivative of the Lagrangian $\mathcal{L}$ with respect to $K$. By evaluating this partial derivative, we obtain a tractable, closed-form expression for the gradient, which is provided in the following proposition. 

\begin{proposition}\label{prop:am}
The gradient of $J$ with respect to $K$
is:
\begin{equation}\label{eq:grad_K}
    \nabla_K J
=
    -2\bigl(RK + \gamma B^{\top}P A_K\bigr)\Lambda_P
   -2\gamma^2\,B^{\top}\Lambda_Z\,A_K(Z+M).
\end{equation}
\end{proposition}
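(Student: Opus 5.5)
The plan is the standard adjoint (Lagrangian) argument. First I show that the total derivative of $J(K)=J(K,P(K),Z(K))$ equals the partial derivative $\partial\mathcal{L}/\partial K$ evaluated at the multipliers of Lemma~\ref{lem:adjoint}. Then I compute that partial derivative explicitly.

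\textbf{Step 1: smoothness of $P$ and $Z$.} Since $\Dstab$ is open and the Lyapunov operators $P\mapsto P-\gamma A_K^\top P A_K$ and $Z\mapsto Z-\gamma^2A_KZA_K^\top$ are invertible there (by the preceding remark), the implicit function theorem applied to $F_P(K,P)=0$ and $F_Z(K,Z)=0$ shows that $P(K)$ and $Z(K)$ are smooth on $\Dstab$. Since $F_P\equiv 0$ and $F_Z\equiv 0$ along these solution maps, $J(K)=\mathcal{L}(K,P(K),Z(K),\Lambda_P,\Lambda_Z)$ for any fixed $\Lambda_P,\Lambda_Z$.

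\textbf{Step 2: the implicit terms cancel.} Taking the differential in a direction $dK$ gives
\[
dJ=\langle \partial_K\mathcal{L},dK\rangle+\langle \partial_P\mathcal{L},dP\rangle+\langle \partial_Z\mathcal{L},dZ\rangle .
\]
Choosing $\Lambda_P,\Lambda_Z$ as the unique solutions of \eqref{eq:adj_P}--\eqref{eq:adj_Z} makes the last two terms vanish, so $\nabla_KJ=\partial_K\mathcal{L}$.

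\textbf{Step 3: symmetry of the multipliers.} The right-hand sides $-\partial J/\partial P$ in \eqref{eq:dJdP} and $-\partial J/\partial Z$ in \eqref{eq:dJdZ} are symmetric, because $P$, $Z$ and $\Sigma$ are symmetric. By uniqueness, the transpose of a solution is again a solution, so $\Lambda_P$ and $\Lambda_Z$ are symmetric. This is needed to merge the pairs of transposed terms below.

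\textbf{Step 4: computing $\partial_K\mathcal{L}$.} $J$ has no explicit dependence on $K$, so only the constraint terms contribute. Use $dA_K=B\,dK$ and the identity $\tr(X^\top dK)\Rightarrow\nabla=X$.
\begin{itemize}
\item[(a)] From $-\tr(\Lambda_P K^\top RK)$: the differential is $-\tr\bigl(\Lambda_P(dK^\top RK+K^\top R\,dK)\bigr)$. With $\Lambda_P$ and $R$ symmetric, this gives the gradient contribution $-2RK\Lambda_P$.
\item[(b)] From $-\gamma\tr(\Lambda_P A_K^\top PA_K)$: the differential is $-\gamma\tr\bigl(\Lambda_P(dK^\top B^\top PA_K+A_K^\top PB\,dK)\bigr)$, giving the contribution $-2\gamma B^\top PA_K\Lambda_P$.
\item[(c)] From $-\gamma^2\tr\bigl(\Lambda_Z A_K(Z+M)A_K^\top\bigr)$: since $Z+M$ and $\Lambda_Z$ are symmetric, the contribution is $-2\gamma^2B^\top\Lambda_Z A_K(Z+M)$.
\end{itemize}
Summing (a)--(c) yields \eqref{eq:grad_K}.

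\textbf{Main obstacle.} The only delicate point is Step 2. One must verify that the derivative conventions used for $\partial\mathcal{L}/\partial P$ in Lemma~\ref{lem:adjoint} match the Frobenius pairing with the perturbations $dP$ and $dZ$. In particular, the stationarity condition there (for example $\Lambda_P-\gamma A_K\Lambda_PA_K^\top$) must be exactly the operator adjoint of the linearized constraint acting on $dP$. I would check this by taking traces, using the cyclic property and the symmetry of $dP$, $dZ$, $\Lambda_P$ and $\Lambda_Z$. It also confirms that $M=\gamma^{-2}xx^\top+(1-\gamma^2)^{-1}\Sigma$ reproduces the constraint \eqref{eq:Z_lyap}, so $F_Z$ is the right constraint function.
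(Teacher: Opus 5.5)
Your proposal is correct and follows essentially the same route as the paper's proof in Appendix~C. There, the adjoint multipliers from Lemma~\ref{lem:adjoint} remove the implicit dependence of $J$ on $K$ through $P$ and $Z$, and the gradient is read off from the explicit $K$-differentials of $\tr(\Lambda_P^{\top}F_P)$ and $\tr(\Lambda_Z^{\top}F_Z)$ using $\mathrm{d}A_K=B\,\mathrm{d}K$. Your implicit-function-theorem smoothness argument and your Step~3 (symmetry of $\Lambda_P,\Lambda_Z$ via uniqueness of the Lyapunov solutions) make explicit two points the paper uses tacitly. The symmetry is what the paper relies on when it writes $\Lambda_P$ in place of $\Lambda_P^{\top}$ to merge the transposed trace terms into the factor $2$.
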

 \begin{proof}
    See Appendix C. \hspace*{\fill} \qed
 \end{proof}

\subsection{Penalized objective}
Note that the discounted Lyapunov equation of $P$ is well-posed up to  $\rho(A_K) = 1/\sqrt{\gamma} > 1$, and the original cost $J(K)$
remains finite as $\rho(A_K) \to 1$. The  sublevel sets of $J$ can accumulate on the boundary of $\mathcal{D}_{\mathrm{stab}}$ and fail to be compact. Now we  introduce 
a \emph{penalized objective}:
\begin{equation}\label{eq:Jmu}
    J_\mu(K) = J(K) + \mu\log\det\widetilde{P}(K),
    \qquad K\in\Dstab.
\end{equation}
where  $\mu>0$ is a barrier weight  and $\widetilde{P}(K)\succ 0$ is the unique
positive definite solution to the \emph{undiscounted} 
Lyapunov equation:
\begin{equation}\label{eq:barrier_lyap}
    \widetilde{P} - A_K^{\top}\widetilde{P}\,A_K = I.
\end{equation}
Adding a penalty  $\log\det\widetilde{P}(K)$ to $J(K)$, which diverges to $+\infty$
 exactly at $\rho(A_K) = 1$, forces  the penalized objective function $J_\mu \to +\infty$ at the boundary of $\partial\mathcal{D}_{\mathrm{stab}}$
and makes every sub-level set compact, which plays an important role for convergence guarantees. 

The next proposition gives the explicit gradient of $\log\det\widetilde{P}(K)$ with respect to $K$. 
\begin{proposition}
\label{prop:barrier_grad}
We have
\begin{equation}\label{eq:grad_barrier}
    \nabla_K\,\log\det\widetilde{P}
 =-2\,B^{\top}\widetilde{P}\,A_K\,\widetilde{\Lambda}.
\end{equation}
where $\widetilde{\Lambda}$ is the unique solution to the 
Lyapunov equation:
\begin{equation}\label{eq:barrier_adj}
    \widetilde{\Lambda} - A_K\,\widetilde{\Lambda}\,A_K^{\top}
    = -\widetilde{P}^{-1}.
\end{equation}
\end{proposition}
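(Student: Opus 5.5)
The plan is to compute the gradient by a direct differential (adjoint) argument, in the same way the adjoint equations of Lemma~\ref{lem:adjoint} were derived. We do not form a Lagrangian explicitly. Instead, we differentiate the barrier Lyapunov equation \eqref{eq:barrier_lyap} and then eliminate the implicit sensitivity $d\widetilde{P}$ with the adjoint variable $\widetilde{\Lambda}$ from \eqref{eq:barrier_adj}.

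\textbf{Step 1 (well-posedness and differentiability).} Fix $K\in\Dstab$. Since $\rho(A_K\otimes A_K)=\rho(A_K)^2<1$, both \eqref{eq:barrier_lyap} and \eqref{eq:barrier_adj} are uniquely solvable, as in the remark following Lemma~\ref{lem:adjoint}. Moreover, $\widetilde{P}=\sum_{k\ge 0}(A_K^\top)^kA_K^k\succeq I\succ 0$, so $\log\det\widetilde{P}$ is well defined. The map $K\mapsto\widetilde{P}(K)$ is smooth on $\Dstab$, because it is the inverse of a nonsingular linear operator depending polynomially on $K$, applied to $I$.

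\textbf{Step 2 (perturbation of the Lyapunov equation).} Take a perturbation $dK$, so that $dA_K=B\,dK$. Differentiating \eqref{eq:barrier_lyap} gives
\begin{equation*}
d\widetilde{P}-A_K^\top d\widetilde{P}\,A_K = dK^\top B^\top\widetilde{P}A_K + A_K^\top\widetilde{P}B\,dK =: E.
\end{equation*}
Jacobi's formula then gives $d\log\det\widetilde{P}=\tr(\widetilde{P}^{-1}d\widetilde{P})$.

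\textbf{Step 3 (adjoint elimination, the key step).} Set $S:=-\widetilde{\Lambda}$, so that $S-A_KSA_K^\top=\widetilde{P}^{-1}$. By uniqueness and the symmetry of $\widetilde{P}^{-1}$, $S$ is symmetric. Substituting this expression for $\widetilde{P}^{-1}$ and using cyclicity of the trace,
\begin{equation*}
\tr(\widetilde{P}^{-1}d\widetilde{P})=\tr\bigl((S-A_KSA_K^\top)d\widetilde{P}\bigr)=\tr\bigl(S(d\widetilde{P}-A_K^\top d\widetilde{P}A_K)\bigr)=\tr(SE).
\end{equation*}
Expanding $E$, and using that $S$ and $\widetilde{P}$ are symmetric so the two terms coincide,
\begin{equation*}
\tr(SE)=2\,\tr\bigl(dK^\top B^\top\widetilde{P}A_KS\bigr).
\end{equation*}

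\textbf{Step 4 (read off the gradient).} Under the Frobenius inner product $\langle X,Y\rangle=\tr(X^\top Y)$, Step 3 gives
\begin{equation*}
\nabla_K\log\det\widetilde{P}=2B^\top\widetilde{P}A_KS=-2B^\top\widetilde{P}A_K\widetilde{\Lambda},
\end{equation*}
which is \eqref{eq:grad_barrier}.

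The main point requiring care is the sign and transpose bookkeeping in Step 3. The transfer $\tr(A_KSA_K^\top d\widetilde{P})=\tr(S\,A_K^\top d\widetilde{P}A_K)$ is what makes the adjoint equation \eqref{eq:barrier_adj} carry $A_K$ on the left and $A_K^\top$ on the right, the reverse of \eqref{eq:barrier_lyap}. The minus sign in \eqref{eq:barrier_adj} is what produces the leading $-2$ in \eqref{eq:grad_barrier}.
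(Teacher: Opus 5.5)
Your proof is correct and is essentially the paper's adjoint argument from Appendix D. The paper forms the Lagrangian $\widetilde{\mathcal{L}}=\log\det\widetilde P+\tr(\widetilde\Lambda^\top H)$ and imposes $\partial\widetilde{\mathcal{L}}/\partial\widetilde P=0$; your Step 3 does the same elimination of $\mathrm{d}\widetilde P$ explicitly through the trace identity, and it also makes explicit the symmetry of $\widetilde\Lambda$ that the paper uses tacitly when merging the two trace terms into the factor $-2$.
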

\begin{proof}
    See Appendix D. \hspace*{\fill} \qed
\end{proof}

All subsequent analysis (algorithm, convergence) is carried out
on $J_\mu$ rather than on $J$ directly.

\subsection{Adjoint gradient descent algorithm}
The proposed adjoint gradient descent scheme is summarized in Algorithm \ref{alg:agd}.
Starting from a stabilizing LQR gain, each iteration first solves the Lyapunov equations to obtain the matrices \(P\), \(Z\), and the barrier matrix \(\widetilde P\) (lines 3--5). 
The partial derivatives of the objective with respect to the  variables $P,Z$ are then evaluated, and the corresponding adjoint Lyapunov equations are solved to obtain the Lagrange multipliers \(\Lambda_P\), \(\Lambda_Z\), and \(\widetilde\Lambda\) (lines 6--10). 
Finally, the controller is updated using  gradient descent with backtracking Armijo line search, which ensures sufficient decrease of the penalized objective $J_{\mu}$ and keeps the iterates inside $\mathcal{D}_{\rm{stab}}$ (lines 11--16).

\begin{algorithm}[t]
\caption{Adjoint Gradient Descent with Logarithmic Barrier}
\label{alg:agd}
\begin{algorithmic}[1]
\Require $A,B,Q,R,\Sigma,x,\gamma,\alpha,\mu>0$,
         $b$,
         $c$,
         $d$,
         $M$,
         $\beta\in(0,1)$, $\tau\in(0,1)$, $\eta_0>0$, $\varepsilon_{\rm grad}>0$
\Ensure Stationary $K_\mu^*$ with $\rho(A+BK_\mu^*)<1$
\State \textbf{Initialize:} $K_0\leftarrow$ LQR gain (ensures $K_0\in\Dstab$), \(k\leftarrow 0\).
\While{$\|\nabla_K J_\mu(K_k)\|>\varepsilon_{\rm grad}$}
    \State Solve \eqref{eq:lyap_P} $\to P$
    \State Solve \eqref{eq:Pxsigma} $\to Z$
    \State Solve \eqref{eq:barrier_lyap} $\to\widetilde{P}$
    \State Compute $\partial J/\partial P$ via \eqref{eq:dJdP}
    \State Compute $\partial J/\partial Z$ via \eqref{eq:dJdZ}
    \State Solve \eqref{eq:adj_P} $\to\Lambda_P$
    \State Solve \eqref{eq:adj_Z} $\to\Lambda_Z$
    \State Solve \eqref{eq:barrier_adj} $\to\widetilde{\Lambda}$
    \State $\nabla_K J_\mu
           \leftarrow \eqref{eq:grad_K} + \mu\cdot\eqref{eq:grad_barrier}$
    \State $\eta\leftarrow\eta_0$
    \While{$J_\mu(K_k-\eta\nabla_K J_\mu)
           > J_\mu(K_k)-\tau \eta\|\nabla_K J_\mu\|^2$}
        \State $\eta\leftarrow\beta\eta$ \ $\triangleright$ Backtracking-Armijo line search 
    \EndWhile
    \State $\eta_k\leftarrow\eta$
    \State $K_{k+1}\leftarrow K_k - \eta_{k}\,\nabla_K J_\mu$, $k \leftarrow k+1$.
\EndWhile
\State \Return $K_\mu^*\leftarrow K_k$
\end{algorithmic}\label{Alg:Adjoinddescent}
\end{algorithm}

\subsection{Convergence and complexity analysis}
Let $\mathcal{S}_0 = \{K\in\R^{m\times n} \mid J_\mu(K)\le J_\mu(K_0)\}$ where $K_0$ is the initial stabilizing controller in Algorithm~\ref{Alg:Adjoinddescent}. We now show that Algorithm 1 guarantees convergence of $K_k$ to a stationary point of $J_{\mu}$
 at a sublinear rate.

\begin{theorem}\label{thm:conv}
Let $\{K_k\}_{k\ge 0}$ be generated by Algorithm~\ref{alg:agd}. There exists  $\eta_{\min}>0$ such that 
\begin{enumerate}
    \item All iterates satisfy $K_k\in\mathcal{S}_0\subset\Dstab$
          for all $k\ge 0$.
    \item The step sizes satisfy $\eta_k\ge\eta_{\min}>0$
          for all $k\ge 0$.
    \item $\displaystyle\lim_{k\to\infty}\|\nabla J_\mu(K_k)\|=0$.
\end{enumerate}
Furthermore, the best-iterate
gradient satisfies:
\begin{equation}\label{eq:rate}
    \min_{0\le k\le N}\|\nabla J_\mu(K_k)\|^2
    \;\le\;
    \frac{J_\mu(K_0) - \inf_{\mathcal{S}_0}J_\mu}
         {\tau\eta_{\min}\,(N+1)}
    \;=\; O\!\left(\tfrac{1}{N}\right).
\end{equation}
\end{theorem}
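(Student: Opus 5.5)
The plan follows the standard descent-method template: first show that the sublevel set $\mathcal{S}_0$ is compact and contained in $\Dstab$, then get a uniform Lipschitz bound for $\nabla J_\mu$ on a neighborhood of it, and finally run the usual Armijo analysis.

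\emph{Step 1: compactness of $\mathcal{S}_0$.} On $\Dstab$, the matrices $P,Z,\widetilde P$ are rational functions of $K$ (via $\mathrm{vec}$ and $(I-aA_K^\top\otimes A_K^\top)^{-1}$), so $J_\mu$ is real-analytic there. We will show that $J_\mu$ is coercive on $\Dstab$. There are two ways a sequence can escape, and both must be ruled out.
\begin{itemize}
\item If $K_j\to \bar K\in\partial\Dstab$, then $\rho(A_{K_j})\to 1$. Using $\widetilde P=\sum_t (A_K^\top)^t A_K^t$ and a unit eigenvector of $A_K$ with eigenvalue near the unit circle, $\lambda_{\max}(\widetilde P)\ge (1-\rho(A_K)^2)^{-1}\to\infty$. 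Since $\widetilde P\succeq I$, we have $\log\det\widetilde P\ge\log\lambda_{\max}(\widetilde P)\to\infty$.
\item If $\|K_j\|\to\infty$, then $P\succeq Q+K^\top RK$, so $J\ge x^\top Px+b\,\tr(P\Sigma)\ge b\lambda_{\min}(R)\lambda_{\min}(\Sigma)\|K\|_F^2\to\infty$. Here we use that $\Sigma\succ0$, $b>0$, and that the variance terms are nonnegative (as $P\succ0$ and $Z\succeq0$). The barrier term is also bounded below by $0$.
\end{itemize}
Hence $J_\mu$ is continuous on $\Dstab$ and tends to $+\infty$ at both the boundary and infinity, so $\mathcal{S}_0$ is closed, bounded, and contained in $\Dstab$. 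Its minimum $\inf_{\mathcal{S}_0}J_\mu$ is therefore finite.

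\emph{Step 2: Lipschitz gradient near $\mathcal{S}_0$.} Since $\mathcal{S}_0$ is compact and $\Dstab$ is open, there is $\delta>0$ such that the closed $\delta$-neighborhood $\mathcal{S}_0^\delta$ lies in $\Dstab$ and is compact. Because $J_\mu$ is $C^2$ on $\Dstab$ (the gradient from Proposition~\ref{prop:am} plus $\mu$ times Proposition~\ref{prop:barrier_grad} is smooth), there are constants
\[
G:=\max_{\mathcal{S}_0}\|\nabla J_\mu\|, \qquad L:=\max_{\mathcal{S}_0^\delta}\|\nabla^2 J_\mu\| .
\]

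\emph{Step 3: lower bound on the step size.} Let $K\in\mathcal{S}_0$ and $\eta\le \delta/G$. The whole segment from $K$ to $K-\eta\nabla J_\mu(K)$ then stays in $\mathcal{S}_0^\delta$, so the descent lemma gives
\[
J_\mu\bigl(K-\eta\nabla J_\mu\bigr)\le J_\mu(K)-\eta\Bigl(1-\tfrac{L\eta}{2}\Bigr)\|\nabla J_\mu\|^2 .
\]
The Armijo condition therefore holds whenever $\eta\le\min\{\delta/G,\,2(1-\tau)/L\}$. Backtracking consequently stops with
\[
\eta_k\ge\eta_{\min}:=\min\bigl\{\eta_0,\ \beta\delta/G,\ 2\beta(1-\tau)/L\bigr\}.
\]
One subtlety must be handled explicitly: trial steps with larger $\eta$ may leave $\Dstab$. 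We define $J_\mu=+\infty$ outside $\Dstab$, so those trials are simply rejected.

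\emph{Step 4: induction and rate.} By induction, Armijo acceptance gives $J_\mu(K_{k+1})\le J_\mu(K_k)$, so $K_k\in\mathcal{S}_0\subset\Dstab$ for all $k$, which is item 1; item 2 is Step 3. Summing the decrease inequality $J_\mu(K_k)-J_\mu(K_{k+1})\ge\tau\eta_{\min}\|\nabla J_\mu(K_k)\|^2$ over $k=0,\dots,N$ telescopes to
\[
\tau\eta_{\min}\sum_{k=0}^N\|\nabla J_\mu(K_k)\|^2\le J_\mu(K_0)-\inf_{\mathcal{S}_0}J_\mu .
\]
This yields \eqref{eq:rate}. Letting $N\to\infty$ shows the series converges, so its terms vanish, which is item 3.

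I expect the main obstacle to be Step 1, specifically the boundary divergence of the barrier. The lower bound on $\lambda_{\max}(\widetilde P)$ has to be made rigorous for complex eigenvectors (or non-diagonalizable $A_K$), and we must also confirm that the discounted terms do not counteract it. They cannot, since they are bounded below, but this needs to be checked rather than assumed.
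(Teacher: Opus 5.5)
Your proposal is correct and follows the paper's proof essentially step for step. You establish compactness of $\mathcal{S}_0\subset\Dstab$ from the barrier bound $\log\det\widetilde P\ge-\log(1-\rho(A_K)^2)$ together with coercivity $J\ge b\,\lambda_{\min}(R)\lambda_{\min}(\Sigma)\|K\|_F^2$, then use a Hessian bound on a compact tube around $\mathcal{S}_0$ to obtain the same $\eta_{\min}=\min\{\eta_0,\beta\delta/G,2\beta(1-\tau)/L\}$, and finish with induction and telescoping; the complex-eigenvector concern you flag is not a real obstacle, because every eigenvalue has an eigenvector regardless of diagonalizability and $z^*\widetilde P z\le\lambda_{\max}(\widetilde P)\|z\|^2$ holds for any $z\in\mathbb{C}^n$ when $\widetilde P$ is real symmetric.
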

\begin{proof}
    See Appendix E. \hspace*{\fill} \qed
\end{proof}

\textbf{Computational Complexity}. The total computation cost of Algorithm~\ref{Alg:Adjoinddescent} is $O(n^3/{\varepsilon_{\rm grad}}^2)$.  Each iteration requires solving six discrete-time Lyapunov equations (three primal and three adjoint), each of size $n \times n$  and costing $O(n^3)$ via the Bartels-Stewart algorithm, plus $O(1)$ matrix multiplications of the same size for assembling the gradient. The Armijo backtracking line search evaluates $J_\mu$ at each trial step, requiring three additional Lyapunov solves per trial. The number of trials is bounded by $\lceil\log_\beta(\eta_0/\eta_{\min})\rceil$, which is a constant. The per-iteration cost is therefore $O(n^3)$.  By the convergence rate, achieving $\min_{k \leq N}\|\nabla J_\mu(K_k)\| \leq \varepsilon_{\rm grad}$ requires $N=O(1/{\varepsilon^2_{\rm grad}})$, giving a total cost of $O(n^3/{\varepsilon^2_{\rm grad}})$.

%% file: Sec/casestudy.tex
\section{Numerical experiments}
\subsection{Case study: vehicle steering control}
Let us consider a discretized linear model for vehicle steering~\cite{aastrom2021feedback,kishida2022risk}. The system dynamics, which describes the vehicle's lateral deviation, is governed by the state-space matrices
$A= [1 \ 0.2;  0 \ 1], \  B=[0.06; 0.20]$.
The state $x = [p, \theta] ^\top$ consists of the lateral position $p$ and the heading angle $\theta$, while the control input $u$ is the steering angle.  
For the mean-variance optimal control problem, the parameters are set as follows:
\begin{equation*}
  Q=10I_2, \, R=1,   \, \gamma=0.8, \, \Sigma=[0.05 \ 0.01;0.01 \ 0.05].
\end{equation*}
We select an initial state $x_0=x=[1,2]^\top$ and $\mu=0.01$.  
We compute the stationary points, denoted as $K_\mu^*(\alpha)$, of  the penalized mean-variance optimization problem \eqref{eq:Jmu} using Algorithm~\ref{Alg:Adjoinddescent}  for a range of risk aversion coefficients $\alpha$, which are shown in Table \ref{tab:alpha_k_j}.
We then analyze and verify  the trade-off property between the theoretical value of $\mathbb{E}[G^{K_\mu^*(\alpha)}(x)]$ and $\operatorname{Var}[G^{K_\mu^*(\alpha)}(x)]$ for different values of $\alpha$ (Figure \ref{fig:steering_overview}). 
We further perform Monte Carlo simulations to generate $N$ trajectories ($N=100000$)  to visualize the probability density of random variable $ G^{K_\mu^*(\alpha)}(x)$ under varying $\alpha$ (Figure \ref{fig:steeringdistribution}).
\begin{table}
\centering
\caption{Controllers obtained by Algorithm \ref{Alg:Adjoinddescent} and objective values}
\label{tab:alpha_k_j}
\begin{tabular}{ccccc}
\toprule
\(\alpha\) & \(K_\mu^*(\alpha)\) & \(J\bigl(K_\mu^*(\alpha)\bigr)\) & \(J(K_{\mathrm{DARE}})\) \\
\midrule
0.00  & \([-1.2641,\,-2.1317]\) & 143.1073   & 143.1073   \\
0.05  & \([-1.5193,\,-2.5861]\) & 170.5056   & 174.7772   \\
0.20  & \([ -1.9204,\,-3.4497]\) & 228.6163   & 269.7868   \\
0.50  & \([-2.4851,\,-4.2551]\) & 303.3749   & 459.8060   \\
1.00  & \([-2.9714,\,-4.5874]\) & 393.7318   & 776.5047   \\
5.00  & \([-3.6057,\,-4.7693]\) & 1008.0674  & 3310.0941  \\
50.00 & \([-3.7927,\,-4.7929]\) & 7759.1516  & 31812.9751 \\
\bottomrule
\end{tabular}
\end{table}

Table ~\ref{tab:alpha_k_j} reports the controllers obtained by Algorithm \ref{Alg:Adjoinddescent}, the corresponding original
objective values \(J(K_\mu^*(\alpha))\), and the baseline costs evaluated at
\(K_{\rm DARE}\). For \(\alpha>0\), the inequality
\(J(K_\mu^*(\alpha))<J(K_{\rm DARE})\) as well as the increasing gap between these two values as \(\alpha\)
grows indicates that the risk-neutral controller becomes progressively
less suitable for the mean-variance objective.

\begin{figure}[t]
    \centering
\includegraphics[width=0.75\linewidth]{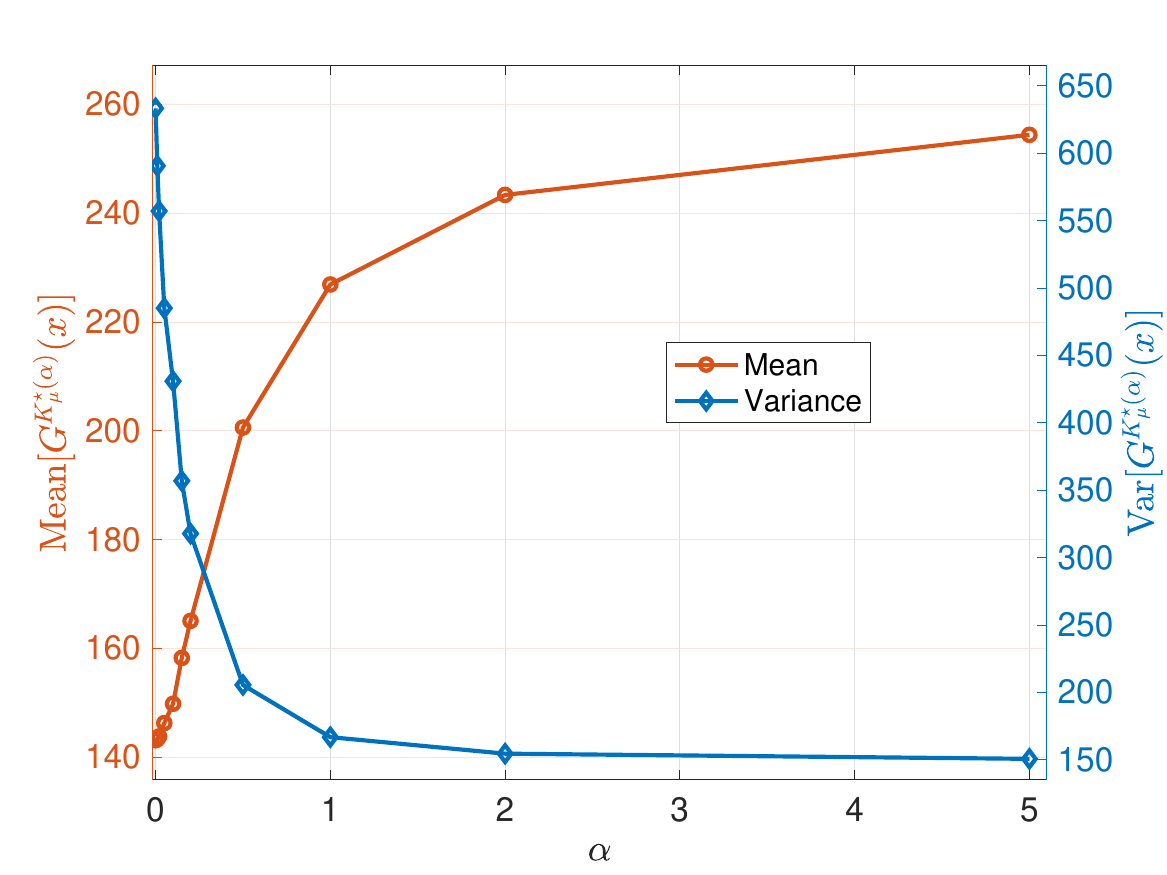}
    \caption{Mean and variance of $G^{K_{\mu}^\star(\alpha)}(x)$ for varying $\alpha$ under the feedback gain $K_\mu^*$.}
    \label{fig:steering_overview}
\end{figure}

Figure~\ref{fig:steering_overview} further explains the tradeoff between the mean and variance of the cumulative return. We can see that $\mathbb{E}[G^{K_\mu^*(\alpha)}]$ increases 
monotonically while $\operatorname{Var}[G^{K_\mu^*(\alpha)}]$  decreases monotonically as shown in Figure \ref{fig:steering_overview}, demonstrating a trade-off achieved by $K_\mu^{*}(\alpha)$ obtained using Algorithm \ref{Alg:Adjoinddescent}.
In addition, when \(0<\alpha<0.2\), the variance decreases sharply from roughly 630 to about 220. When  \(\alpha>2\), the descent decelerates, and the curve almost flattens.  This behavior evidences diminishing marginal gains from additional risk aversion. 
Figure \ref{fig:steeringdistribution} depicts the probability density of $G^{K_{\mu}^*(\alpha)}$ from $\alpha=0$ to $\alpha=50$. As $\alpha$ increases, the distribution progressively shifts rightward while simultaneously becoming more concentrated with lighter tails.
\begin{figure}[t]
    \centering    \includegraphics[width=0.75\linewidth]{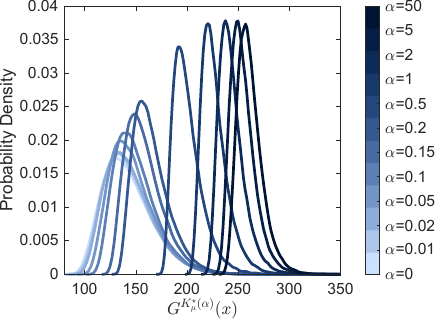}
    \caption{Probability density of $G^{K_{\mu}^*(\alpha)}(x)$ for varying $\alpha$ under the  feedback gain $K_\mu^*$. The color bar denotes different values of the risk aversion coefficient.} 
    \label{fig:steeringdistribution}
\end{figure}

We further provide a physical tradeoff introduced by the weighted variance. Let us define the violation probabilities
$P_p:=P\!\left(\max_{t_{\mathrm{start}}\le t\le T}|p_t|>p_{\max}\right)$, $
P_\theta:=P\!\left(\max_{t_{\mathrm{start}}\le t\le T}|\theta_t|>\theta_{\max}\right)$,
where  $p_t$ is  lateral-position error, $\theta_t$  is  the heading-angle error, and $p_{\max}$ and $\theta_{\max}$ are prescribed thresholds, respectively. These quantities measure the probability that the lateral deviation or heading-angle error exceeds its admissible bound at least once over the time interval $[t_{\mathrm{start}},T]$. We evaluate how the violation probabilities change with respect to~$\alpha$.  Figure \ref{fig:physical_tradeoff} presents two violation probability under the proposed controllers $K^{*}_{\mu}(\alpha)$.  These results reveal that larger values of $\alpha$ reduce the probability of large lateral deviations, whereas the heading-angle violation probability is minimized only for moderate $\alpha$. 
\begin{figure}[t]
    \centering
    \begin{subfigure}[t]{0.48\columnwidth}
        \centering
        \includegraphics[width=\linewidth]{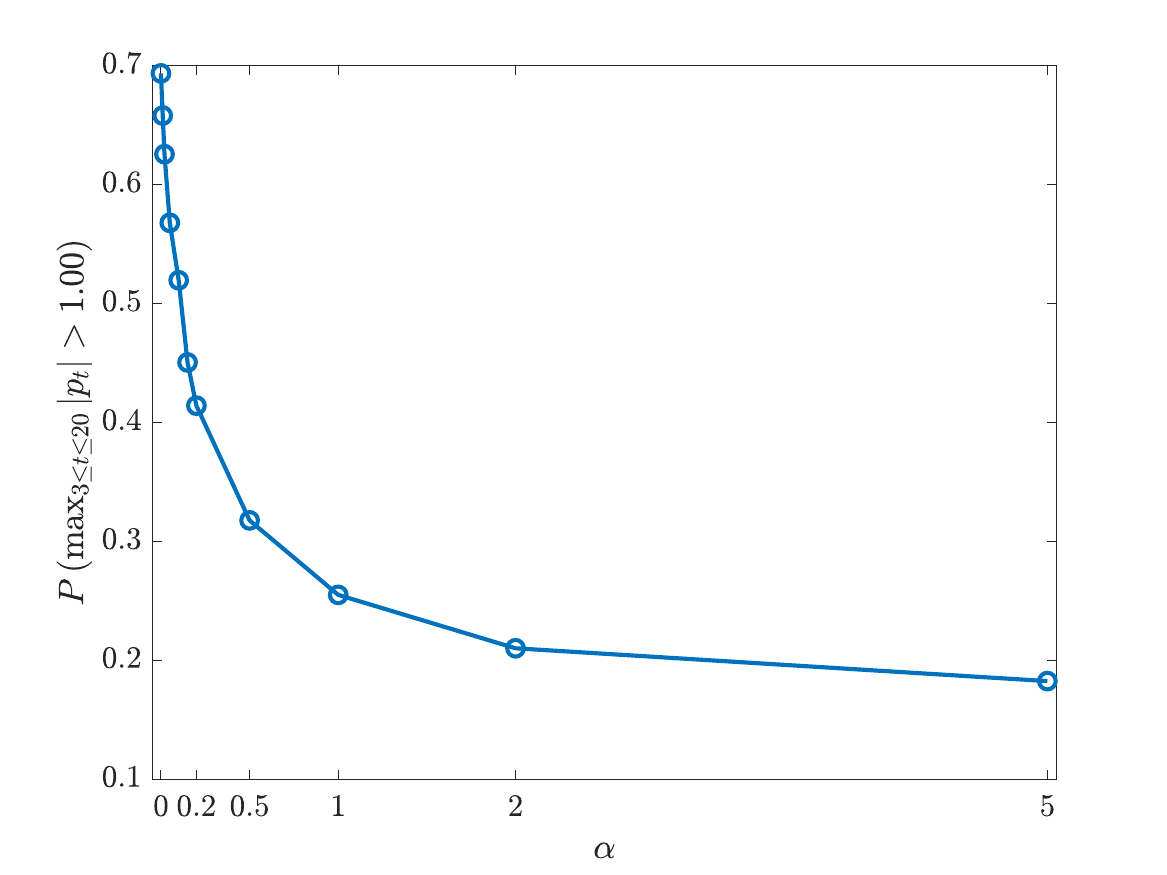}
        \caption{ Lateral-position exceedance probability}
        \label{fig:lateral_exceedance}
    \end{subfigure}
    \hfill
    \begin{subfigure}[t]{0.48\columnwidth}
        \centering
        \includegraphics[width=\linewidth]{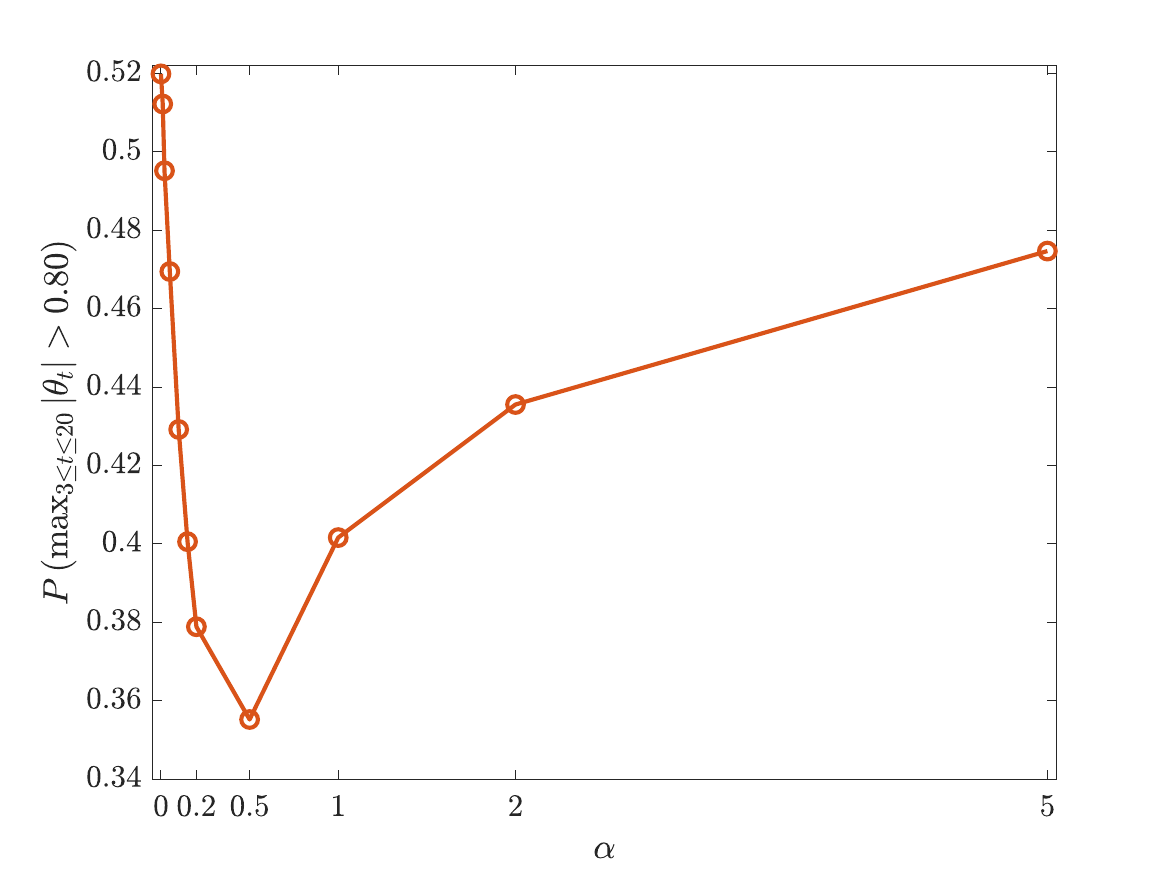}
        \caption{Heading-angle exceedance probability}
        \label{fig:heading_exceedance}
    \end{subfigure}
    \caption{Physical trade-off induced by different values of $\alpha$.}
    \label{fig:physical_tradeoff}
\end{figure}
\subsection{Sensitivity analysis}

Since Algorithm~1 optimizes the penalized objective $J_{\mu}$ rather than the original objective $J$, the  controller obtained for a fixed $\mu>0$ may be biased relative to the solution of $J$. We thus investigate the sensitivity of the stationary solution of $J_{\mu}$ with respect to $\mu$. We use the vehicle steering system in the previous subsection with the same chosen parameters. 

We use a sufficiently small value $\mu_{\mathrm{ref}}=10^{-8}$ as a numerical reference, for which the barrier contribution is negligible. Let $K_{\mu_{\mathrm{ref}}}^{\star}$ be the solution obtained when $\mu=\mu_{\rm ref}$. We consider three sensitivity criteria: (1) the controller deviation is defined by $\Delta_K^{\mu}:=\|K_{\mu}^{\star}-K_{\mu_{\mathrm{ref}}}^{\star}\|_F$; (2) the relative distortion of the original objective $J$ is measured by 
$D_J^{\mu}:=
|J(K_{\mu}^{\star})-J(K_{\mu_{\mathrm{ref}}}^{\star})|
/|J(K_{\mu_{\mathrm{ref}}}^{\star})|$; (3) 
the deviation from first-order stationarity of the original objective is quantified by
$R_J^{\mu}:=\|\nabla J(K_{\mu}^{\star})\|_F$.
\begin{table}[t]
\centering
\caption{Sensitivity analysis with respect to $\mu$
when
$\alpha=0.2$.}
\label{tab:mu_sensitivity}
\resizebox{\columnwidth}{!}{
\begin{tabular}{c c c c}
\hline
$\mu$
&
$D_J^{\mu}$
&
$R_J^{\mu}$
&
$\Delta_K^{\mu}$
\\
\hline
$10^{-4}$
&
$6.129\times 10^{-14}$
&
$2.753\times 10^{-5}$
&
$1.049\times 10^{-6}$
\\
$10^{-3}$
&
$6.142\times 10^{-12}$
&
$2.753\times 10^{-4}$
&
$1.029\times 10^{-5}$
\\
$10^{-2}$
&
$6.192\times 10^{-10}$
&
$2.765\times 10^{-3}$
&
$1.027\times 10^{-4}$
\\
$10^{-1}$
&
$6.183\times 10^{-8}$
&
$2.762\times 10^{-2}$
&
$1.027\times 10^{-3}$
\\
$10^{0}$
&
$6.088\times 10^{-6}$
&
$2.733\times 10^{-1}$
&
$1.020\times 10^{-2}$
\\
$10^{1}$
&
$5.297\times 10^{-4}$
&
$2.486 \times 10^{0}$
&
$9.613\times 10^{-2}$
\\
$10^{2}$
&
$2.353\times 10^{-2}$
&
$1.421\times 10^{1}$
&
$6.910\times 10^{-1}$
\\
\hline
\end{tabular}
}
\end{table}
Table~\ref{tab:mu_sensitivity} presents the numerical analysis over these three criteria. We see that  the stationary solution of $J_{\mu}$ is in general not sensitive to $\mu$ over a broad range of moderate barrier parameters. 
Noticeable sensitivity appears only when $\mu$ becomes sufficiently large. For example, at $\mu=10$ and $\mu=100$, the controller deviation and the original-gradient residual become ‌non-negligible‌.

In practice, one can select $\mu$ using  $D_J^{\mu}$ and $R_J^\mu$. For example, for a fixed $\alpha$, let $\tau_J$ and $\tau_g$ denote the tolerance for $D_J^{\mu}$ and $R_J^\mu$.
Let $\mu_J:=\max\{\mu:D_J^\mu\leq\tau_J\}$ and $\mu_g:=\max\{\mu:R_J^\mu\leq\tau_g\}$. Since both tolerances must be satisfied, the largest admissible barrier parameter is $\mu^\star=\min\{\mu_J,\mu_g\}$, which gives a good admissible barrier effect with prescribed bias.
\subsection{Comparison with LEQG}
Let us further compare the controllers obtained from the proposed mean-variance formulation using Algorithm \ref{Alg:Adjoinddescent} with the linear exponential quadratic Gaussian (LEQG) controllers~\citep{SHAIJU20088773}, which 
solves
\begin{equation}\label{eq:LEQG}
    \min_{K} 
\frac{2}{\theta}\operatorname{log}\mathbb{E}\!\left[  e^{\frac{\theta}{2} G^K(x)} \right],
\end{equation}
where \(G^K(x)\) is the cumulative random return and \(\theta\) is the sensitivity parameter.  
The Taylor expansion gives
\begin{align*}
    &\min_{K} \frac{2}{\theta}
\log \mathbb{E}\!\left[e^{\frac{\theta}{2}G^K(x)}\right]
\\&=
\min_{K}
\left\{
\mathbb{E}[G^K(x)]
+
\frac{\theta}{4}\operatorname{Var}[G^K(x)]
+
O(\theta^2)
\right\}.
\end{align*}
Therefore, in the small-\(\theta\) regime, the mean–variance objective can be interpreted as the first-order approximation of the LEQG objective.
This observation motivates its use as a baseline for comparison.
 \begin{figure}[t]
    \centering
\includegraphics[width=0.75\linewidth]{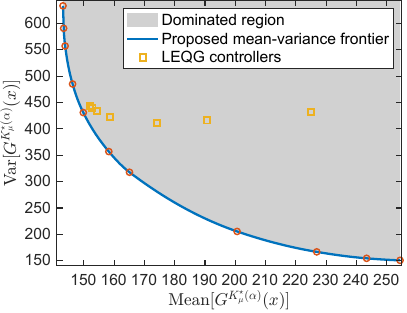}
    \caption{Mean-variance frontier versus LEQG baseline.
   The LEQG controllers are computed using the non-truncated objective in \eqref{eq:LEQG}, rather than its first-order approximation.
    } 
    \label{fig:steeringpareto}
\end{figure}
The comparison is performed over the vehicle steering system  in the previous subsection with the same chosen parameters. 
We compare the resulting controllers in the mean-variance plane induced by the random return.
Figure~\ref{fig:steeringpareto} displays the mean-variance frontier characterized by the proposed controllers across varying values of $\alpha$. The shaded region indicates the set of dominated mean-variance pairs. We observe that the LEQG baseline controllers lie strictly above the frontier, confirming that they are Pareto-inefficient with respect to the mean-variance criterion of the cumulative return $G^K(x)$
 in our setting.

\subsection{Scalability evaluation}
We further evaluate the scalability of our proposed algorithm over a group of systems with different state-input dimensions. For each state-input dimension pair $ (n,m)$, the system matrices $A$ and $B$ are randomly generated.
\begin{figure}[H]
    \centering
\includegraphics[width=0.8\linewidth]{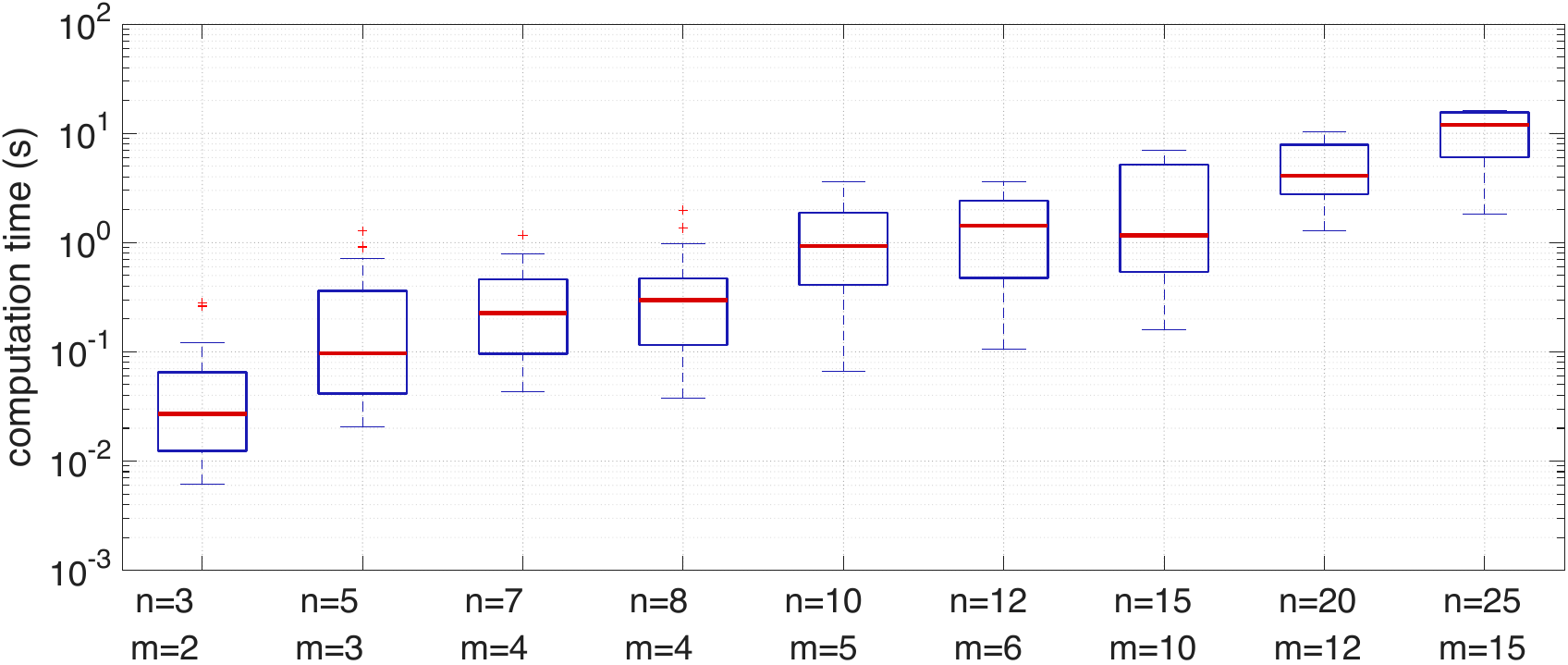}
    \caption{Computation time of adjoint gradient descent algorithm for multiple state and input dimensions.} 
    \label{fig:scalability}
\end{figure}
Fig. \ref{fig:scalability} depicts the computation time in seconds against different system parameters. The vertical axis employs a logarithmic scale ranging from $10^{-2}$ to $10^{2}$ seconds, facilitating the comparison of execution times across distinct levels of complexity. The results are parameterized by $n$, with values spanning from $3$ to $25$ , and $m$, with values ranging from $2$ to $15$. 
We observe a clear upward trend in computational cost as $n$ and $m$ increase, together with a moderate spread across random instances. This behavior is consistent with the increasing complexity of the matrix equations solved within each iteration. Overall, the results suggest that the proposed algorithm exhibits reasonable scalability and remains applicable to systems of substantially higher dimensions.

%% file: Sec/conclusion.tex
\section{Conclusion}
In this paper, we characterized the variance of the cumulative quadratic return in the distributional LQR. We derived a closed-form analytical expression for the variance under the assumption of zero-mean exogenous noise with distributions symmetric about the origin and established a tighter bound for noise with a bounded fourth moment. For zero-mean Gaussian disturbances, the variance simplifies to a more concise form. Building on the closed-form variance, we proposed a mean–variance optimal control framework that penalizes variance using a tunable risk aversion coefficient. In this framework, we developed an adjoint gradient descent method and established its convergence to a stationary point of the penalized objective.
We validated our methodology through numerical experiments.

There are a few interesting future directions to be explored. The current formulation depends on the initial state $x$, so that the objective function can be rewritten in the form of $J(K,x)$. A natural extension is a worst-case formulation  $\min_K \sup_{x \in \mathcal{X}} J(K, x)$
 over a compact uncertainty set $\mathcal{X}$, which introduces the interesting max-min structure to be explored. Another direction is to extend the proposed  mean--variance synthesis framework, together with its stability and convergence analysis, to continuous-time linear systems.

%% file: Sec/Appendix_A.tex
\section*{Appendix A: Proof of Proposition \ref{prop:zerocov}}
Let us begin with a useful lemma before providing the proof of Proposition \ref{prop:zerocov}.
\begin{lemma}\label{lem:oddzero}
Let $w$ denote a random vector with the same distribution as $w_k$ (i.e., $w\sim \mathcal{D}$). Under Assumption~\ref{Assump:symmetric}, 
$\mathbb{E}[ww^\top Pw]=0.$
\end{lemma}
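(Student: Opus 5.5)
The plan is a symmetry argument. Set $f(w):=w\,w^\top P w$, a vector whose entries are cubic polynomials in the components of $w$. Then $f$ is odd: $f(-w)=(-w)(-w)^\top P(-w)=-f(w)$. By Assumption~\ref{Assump:symmetric}, $w\mathop{=}\limits^{D}-w$. Hence $\mathbb{E}[f(w)]=\mathbb{E}[f(-w)]=-\mathbb{E}[f(w)]$, which forces $\mathbb{E}[ww^\top Pw]=0$.

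Before using this chain of equalities we must check that the expectation exists. Componentwise,
\[
\bigl|[f(w)]_i\bigr|\le \|w\|\,\|P\|\,\|w\|^2=\|P\|\,\|w\|^3 .
\]
So integrability needs $\mathbb{E}[\|w\|^3]<\infty$. This holds whenever the fourth moment is finite, via $\mathbb{E}\|w\|^3\le(\mathbb{E}\|w\|^4)^{3/4}$. A finite fourth moment is implicitly required anyway for $\mathbb{E}[(w^\top Pw)^2]$ in Theorem~\ref{Theorem:mainresult} to make sense, so I would state it as a standing condition.

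If a density $p$ exists, the same identity can be shown directly. Substitute $w\mapsto -w$ in
\[
\int w\,w^\top P w\,p(w)\,dw
\]
and use $p(-w)=p(w)$ almost everywhere; the integral equals its own negative, so it vanishes.

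The only delicate point is this integrability condition. Once it is secured, the odd-function symmetry argument is immediate.
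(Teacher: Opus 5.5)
Your proof is correct and is essentially the paper's argument: the paper also defines the odd map $g(w)=(w^\top P w)\,w$ and uses $w\mathop{=}\limits^{D}-w$ to conclude $\mathbb{E}[g(w)]=-\mathbb{E}[g(w)]$. The paper leaves integrability implicit, since Assumption~\ref{Assump:symmetric} only guarantees second moments, so adding $\mathbb{E}[\|w\|^3]<\infty$ as a standing condition is a worthwhile extra piece of rigor rather than a different route.
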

\begin{proof}
Define the mapping $g:\mathbb{R}^{n} \rightarrow \mathbb{R}^n$ where $g(w):=ww^\top Pw=(w^\top Pw)\,w.$
Then \(g\) is an odd mapping, i.e., $g(-w)=-g(w)$.
Since  \(w \overset{D}= -w\) from symmetric definition in \cite{blitzstein_hwang_2019},
we have
\[\mathbb E[g(w)]
=\mathbb E[g(-w)]
=\mathbb E[-g(w)]
=-\mathbb E[g(w)].
\]
Hence, $\mathbb E[ww^\top Pw]=\mathbb E[g(w)]=0$. \hspace*{\fill} \qed
\end{proof}

\textbf{Proof of Proposition \ref{prop:zerocov}}:   The key step in the proof is to exploit the fact that the odd-order moments of the auxiliary random variables $w_k$ are zero under Assumption~\ref{Assump:symmetric}.  Recall the definition of $\Psi_i$ ($i=1,2,3$) in Eq.~\eqref{eq:simplify}. 
    
   Let us begin with $\mathbb{E}[\Psi_1 \Psi_2]$, which can be written as 
    \begin{align*}
        \mathbb{E}[\Psi_1 \Psi_2]=        &\mathbb{E}\Big[\Big( \sum_{i=0}^\infty \gamma^{i+1} w_i^\top P A_K^{i+1} x  \Big)\Big( \sum_{j=0}^\infty \gamma^{j+1} w_j^\top P w_j \Big)\Big] \\
        &= \sum_{i=0}^\infty \gamma^{2i+2}\mathbb{E}\left[(w_i^\top P A_K^{i+1} x)(w_i^\top P w_i)\right]\\&+\sum_{\substack{i,j=0 \\ i \ne j}}^{\infty} 
\gamma^{i+1} \gamma^{j+1} \mathbb{E}\left[
\left( w_i^\top P A_K^{i+1} x \right)
\left( w_j^\top P w_j \right)\right].
    \end{align*}
Given the i.i.d. and zero-mean assumption of $w_k$,  $w_i^\top P A_K^{i+1} x$ is independent of  $w_j^\top P w_j$. This yields 
\begin{align*}
        &\mathbb{E}\left[ \left( w_i^\top P A_K^{i+1} x \right) \left( w_j^\top P w_j \right) \right]\\
        &= \underbrace{\mathbb{E} \left[ w_i^\top P A_K^{i+1} x \right] }_{0}
\mathbb{E} \left[w_j^\top P w_j \right]= 0.
\end{align*}
We then focus on $\mathbb{E}\left[(w_i^\top P A_K^{i+1} x)(w_i^\top P w_i)\right]$. For each $i$, 
\begin{align*}
    &\mathbb{E}\Big[\Big( w_i^\top P A_K^{i+1} x \Big) \Big( w_i^\top P w_i\Big)\Big]= \mathbb{E}\Bigg\{\mathrm{tr}\Big[\Big( x^\top (P A_K^{i+1})^\top \\& w_i w_i^\top P w_i\Big)\Big]\Bigg\}
    =\mathrm{tr}\Bigg\{ x^\top (P A_K^{i+1})^\top\mathbb{E} \Big[ w_i w_i^\top P w_i\Big]\Bigg\}.
\end{align*}
We then refer to Lemma \ref{lem:oddzero} and  exploit the i.i.d. property, $\mathbb{E}\Big[ w_i w_i^\top P w_i\Big]=\mathbb{E}\Big[ w w^\top P w\Big]=0$ for all $i \in \mathbb{N}$.
Note that 
$\mathbb{E}[\Psi_1 \Psi_2]$ is just the sum of all zero terms, so we have \(
    \mathbb{E}[\Psi_1 \Psi_2]=\sum_{i=0}^\infty \gamma^{2i+2}\mathbb{E}\left[(w_i^\top P A_K^{i+1} x)(w_i^\top P w_i)\right]=0.\)
Next, we consider  $\mathbb{E}[\Psi_2 \Psi_3]$, which follows that 
\begin{align*}
    &\mathbb{E}\Big[\Big( \sum_{i=0}^\infty \gamma^{i+1} w_i^\top P w_i \Big)\Big( \sum_{j=1}^\infty \gamma^{j+1} w_j^\top P \sum_{\tau=0}^{j-1} A_K^{j-\tau} w_\tau  \Big) \Big] \\& =\mathbb{E} \left[
\sum_{i=0}^{\infty} \sum_{j=1}^{\infty} 
\gamma^{i+1} \gamma^{j+1}
\left( w_i^\top P w_i \right)
\left( w_j^\top P \sum_{\tau=0}^{j-1} A_K^{j - \tau} w_\tau \right)
\right].
\end{align*}
Define $\psi(i,j):=\mathbb{E}\left[\left( w_i^\top P w_i \right)
\left( w_j^\top P \sum_{\tau=0}^{j-1} A_K^{j - \tau} w_\tau \right)\right]$.  We consider two distinct cases for the value of $\psi(i,j)$. 

\begin{enumerate}
    \item [(i)]$i=j$: It follows that 
    \begin{align*}
    &\psi(i,i)=\mathbb{E}\Bigg[ \left( w_i^\top P w_i \right) \left(   w_i^\top P \sum_{\tau=0}^{i-1} A_K^{i-\tau} w_\tau \right)\Bigg] 
    \\&=\mathbb{E}\Big[ w_i^\top P w_i w_i^\top P \Big] \underbrace{\mathbb{E}\Bigg[\sum_{\tau=0}^{i-1} A_K^{i-\tau} w_\tau\Bigg]}_{0}=0.
    \end{align*}
    \item [(ii)] $i\neq j$: Since $\tau \neq j$, $i\neq j$, 
    $w_j^\top$ is independent of $w_i^\top P w_i$ and $\sum_{\tau=0}^{j-1} A_K^{j-\tau} w_\tau$. Therefore, we have
    \begin{align*}
            &\psi(i,j)=\mathbb{E}\Bigg[ \left(   w_j^\top P \sum_{\tau=0}^{j-1} A_K^{j-\tau} w_\tau \right)\left( w_i^\top P w_i \right) \Bigg]\\
            &=\underbrace{\mathbb{E}[w_j^\top]}_{0}P \mathbb{E}\left[\left(  \sum_{\tau=0}^{j-1} A_K^{j-\tau} w_\tau \right)\left( w_i^\top P w_i \right)\right]=0.
    \end{align*}
\end{enumerate}
Therefore,  $\psi(i,j)$ = 0  holds  for all $ i, j$ , as established by case-wise analysis.
Therefore, we have \(\mathbb{E}[\Psi_2\Psi_3]=\sum_{i=0}^{\infty} \sum_{j=1}^{\infty} 
\gamma^{i+1} \gamma^{j+1}\psi(i,j)=0.\)
Finally, we consider  $\mathbb{E}[\Psi_1 \Psi_3]$ and rewrite its summation:
\begin{align*}
    &\mathbb{E}\Big[\Big( \sum_{i=0}^\infty \gamma^{i+1} w_i^\top P A_K^{i+1} x \Big)\Big( \sum_{j=1}^\infty \gamma^{j+1} w_j^\top P \sum_{\tau=0}^{j-1} A_K^{j-\tau} w_\tau  \Big) \Big] 
    \\&=\mathbb{E} \left[
\sum_{i=0}^{\infty} \sum_{j=1}^{\infty} 
\gamma^{i+j+2} 
\left( w_i^\top P A_K^{i+1} x \right)
\left( w_j^\top P \sum_{\tau=0}^{j-1} A_K^{j - \tau} w_\tau \right)
\right].
\end{align*}
Define $\Gamma(i,j):=\mathbb{E}\left[  \left( w_i^\top P A_K^{i+1} x \right)
\left( w_j^\top P \sum_{\tau=0}^{j-1} A_K^{j - \tau} w_\tau \right)\right]$.
\begin{enumerate}
    \item [(i)] $i=j$: It follows that
    \begin{align*}
    &\Gamma(i,i)=\mathbb{E}\Bigg[ \left( w_i^\top P A_K^{i+1} x \right) \left(   w_i^\top P \sum_{\tau=0}^{i-1} A_K^{i-\tau} w_\tau \right)\Bigg] 
    \\&={\mathrm{tr}\Big [ x^\top  (A_K^{i+1})^\top P \Sigma \underbrace{\mathbb{E}\left(\sum_{\tau=0}^{i-1} A_K^{i-\tau} w_\tau\right)}_{0} \Big]} =0.
    \end{align*}
    \item [(ii)] $i\neq j$:
    Since $\tau \neq j$, $i\neq j$, 
    $w_j^\top$ is independent of $w_i^\top P A_K^{i+1}x$ and $\sum_{\tau=0}^{j-1} A_K^{j-\tau} w_\tau$. Therefore, we have
    \begin{align*}
            &\Gamma(i,j)=\mathbb{E}\Bigg[ \left(   w_j^\top P \sum_{\tau=0}^{j-1} A_K^{j-\tau} w_\tau \right)\left(w_i^\top P A_K^{i+1} x  \right) \Bigg]\\
            &=\underbrace{\mathbb{E}[w_j^\top]}_{0}P \mathbb{E}\left[\left(  \sum_{\tau=0}^{j-1} A_K^{j-\tau} w_\tau \right)\left( w_i^\top P A_K^{i+1} x  \right)\right]=0.
    \end{align*}
\end{enumerate}
Therefore,  $\Gamma(i,j)=0$ 
holds  for all $ i, j$, which implies \(    \mathbb{E}[\Psi_1\Psi_3]=\sum_{i=0}^{\infty} \sum_{j=1}^{\infty} 
\gamma^{i+1} \gamma^{j+1}\Gamma(i,j)=0.\)
In conclusion, $\mathbb{E}[\Psi_1 \Psi_2]=\mathbb{E}[\Psi_1 \Psi_3]=\mathbb{E}[\Psi_2 \Psi_3]=0$, thereby completing the proof of Proposition~\ref{prop:zerocov}.
\hspace*{\fill} \qed

%% file: Sec/Appendix_B.tex
\section*{Appendix B: Proof of Theorem \ref{Theorem:mainresult}}
To prove Theorem 1, we need the following lemma. 
\begin{lemma}\label{lem:expectation}
    The expected value of the random variable $G^{K}(x)$ is $x^\top P x +\frac{\gamma}{1-\gamma}\operatorname{tr}(P\Sigma)$.
\end{lemma}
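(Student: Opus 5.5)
The plan is to take expectations term by term in the representation of Proposition~\ref{prop:alternativeG}, i.e.\ $G^K(x)=x^\top Px+\sum_{k\ge 0}\gamma^{k+1}w_k^\top P w_k+2\sum_{k\ge 0}\gamma^{k+1}w_k^\top P s_k$.

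\textbf{Step 1 (the deterministic term).} The first term $x^\top Px$ is deterministic, so it passes through unchanged.

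\textbf{Step 2 (the quadratic-noise series).} For each $k$, since $w_k\sim\mathcal{D}$ has zero mean and covariance $\Sigma$,
\[
\mathbb{E}[w_k^\top P w_k]=\operatorname{tr}(P\,\mathbb{E}[w_kw_k^\top])=\operatorname{tr}(P\Sigma).
\]
Summing the geometric series gives $\sum_{k\ge0}\gamma^{k+1}\operatorname{tr}(P\Sigma)=\frac{\gamma}{1-\gamma}\operatorname{tr}(P\Sigma)$.

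\textbf{Step 3 (the cross series).} As noted after \eqref{Eq:auxiliary_state}, $s_k$ is a function of $x$ and $w_0,\dots,w_{k-1}$ only, so it is independent of $w_k$. Hence
\[
\mathbb{E}[w_k^\top P s_k]=\mathbb{E}[w_k]^\top P\,\mathbb{E}[s_k]=0.
\]
The expectation $\mathbb{E}[s_k]=A_K^{k+1}x$ is finite, which justifies this factorization.

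\textbf{Step 4 (interchanging expectation and infinite sums).} This is the only genuinely delicate point. For the quadratic series, every summand satisfies $w_k^\top P w_k\ge 0$ because $P\succ0$ solves \eqref{eq:lyap_P} with $Q\succ0$. Monotone convergence (Tonelli) therefore justifies the interchange.

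For the cross series, I would bound
\[
\sum_k\gamma^{k+1}\mathbb{E}|w_k^\top Ps_k|\le \|P\|\sum_k\gamma^{k+1}\sqrt{\operatorname{tr}\Sigma\;\mathbb{E}\|s_k\|^2}.
\]
Since $\rho(A_K)<1$, $\mathbb{E}\|s_k\|^2$ is uniformly bounded in $k$. The series is therefore absolutely summable, and Fubini applies.

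Alternatively, the interchange can be avoided entirely. Take expectations in the random-variable Bellman equation, which gives $\mathbb{E}G^K(x)=x^\top(Q+K^\top RK)x+\gamma\,\mathbb{E}G^K(A_Kx+v_0)$. Then verify that $x^\top Px+q$ is its solution using \eqref{eq:lyap_P}. This route requires a uniqueness argument in the class of quadratically growing functions, which is why I would prefer the Fubini argument above.

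Combining Steps 1--3 yields $x^\top Px+\frac{\gamma}{1-\gamma}\operatorname{tr}(P\Sigma)$, which is the claimed formula and matches $V^K(x)$.
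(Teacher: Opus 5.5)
Your proof is correct, but it takes a different route from the paper.

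The paper's proof is one line. Since $V^K(x)$ is by definition $\mathbb{E}\bigl[\sum_{t}\gamma^t x_t^\top(Q+K^\top RK)x_t\bigr]=\mathbb{E}[G^K(x)]$, the lemma is just the standard quadratic form $V^K(x)=x^\top Px+\frac{\gamma}{1-\gamma}\operatorname{tr}(P\Sigma)$, which Section~2.1 quotes without proof. In substance this is your ``alternative'' Bellman-equation route, with the uniqueness issue absorbed into the standard result.

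You instead compute the expectation term by term from the representation in Proposition~\ref{prop:alternativeG}. You justify exchanging expectation and infinite sums by Tonelli for the nonnegative quadratic series and by $L^1$ absolute summability for the cross series. The paper never addresses this interchange.

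\textbf{Paper's route:} it is shorter and needs neither \eqref{Eq:Gkx} nor any integrability argument. However, it rests entirely on the cited value-function formula.

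\textbf{Your route:} it is self-contained once \eqref{Eq:Gkx} is accepted, and it uses only zero mean and finite covariance. It also yields separately the two identities $\mathbb{E}[\Gamma_1]=\frac{\gamma}{1-\gamma}\operatorname{tr}(P\Sigma)$ and $\mathbb{E}[\Gamma_2]=0$, which is what Appendix~B actually needs. Appendix~B attributes the first identity to this lemma. But the lemma only gives the total expectation, which determines $\mathbb{E}[\Gamma_1]$ only once $\mathbb{E}[\Gamma_2]=0$ is established, and the paper proves that later. Your version supplies both facts directly.
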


\begin{proof}
Since the value function in canonical LQR is denoted as the expectation of random return,
    $\mathbb{E}[G^K(x)]=V^{K}(x)=x^\top Px+\frac{\gamma}{1-\gamma}\operatorname{tr}(P\Sigma
    )$ . \hspace*{\fill} \qed
\end{proof}

\textbf{Proof of Theorem \ref{Theorem:mainresult}}:
Recalling Proposition \ref{prop:alternativeG}, we have
\begin{equation*}
    G^K(x)=x^\top Px+ \underbrace{\sum_{k=0}^{\infty} \gamma^{k+1} w_k^\top P w_k}_{:=\Gamma_1} +\underbrace{2 \sum_{k=0}^{\infty} \gamma^{k+1} w_k^\top P s_k}_{:=\Gamma_2}.
\end{equation*}
Note that $\Gamma_1=\Psi_2$ and $\Gamma_2=2(\Psi_1+\Psi_3)$. Based on Proposition \ref{prop:zerocov}, $\Gamma_1$ and $\Gamma_2$ are orthogonal. Therefore, 
the variance of $G^K(x)$ is:
\begin{equation*}
    \operatorname{Var}[G^K(x)]=\operatorname{Var}[\Gamma
    _1]+\operatorname{Var}[\Gamma_2].
\end{equation*}
We first focus on  $\operatorname{Var}[\Gamma_1] $.
Using the result $\mathbb{E}[\Gamma
_1]=\frac{\gamma}{1-\gamma}\operatorname{tr}(P\Sigma)$ from Lemma \ref{lem:expectation}, we have
    \begin{align}
        &\operatorname{Var}[\Gamma_1]=\mathbb{E}[\Gamma_1^2]-\mathbb{E}[\Gamma_1]^2=\mathbb{E}\Bigg[\left( \sum_{i=0}^\infty \gamma^{i+1} w_i^\top P w_i \right)\nonumber\\
        &\left( \sum_{j=0}^\infty \gamma^{j+1} w_j^\top P w_j \right)\Bigg]-\left[\frac{\gamma}{1-\gamma}\operatorname{tr}(P\Sigma) \right]^2. \label{eq:Gamma1proto}
    \end{align}
We then focus on calculating $\mathbb{E}[\Gamma_1^2]$.
In \eqref{eq:Gamma1proto}, when $i = j$, the square of the quadratic form of the auxiliary random variables, $\mathbb{E}[(w_i^\top P w_i)^2]$, appears. However, when $i \neq j$, the products of the quadratic forms of auxiliary random variables appear at different series indices, and no fourth-order term arises. Therefore, we rewrite the summation in \eqref{eq:Gamma1proto} by computing separately for the cases where $i=j$ and where $i\neq j$.
\begin{enumerate}
    \item [(i)] $i=j$: The diagonal summation of the matrix series is as follows:
    \begin{align}\label{eq:Gamma1diagonal}
            \mathbb{E}\left[ \sum_{i=0}^{\infty} \gamma^{2i+2} \left( w_i^\top P w_i\right)^2\right]=\frac{\gamma^2}{1-\gamma^2}\mathbb{E}\Big[ \Big(  w_i^\top Pw_i \Big)^2\Big].
    \end{align}
    \item [(ii)] $i\neq j$: Since $w_i^\top P w_i$ is independent of $w_j^\top P w_j$, 
    the non-diagonal summation of the matrix series is as follows:
   \begin{align}
&\mathbb{E}\Bigg[
\Big( \sum_{i=0}^\infty \gamma^{i+1} w_i^\top P w_i \Big)
\Big( \sum_{\substack{j=0\\ j\ne i}}^\infty
\gamma^{j+1} w_j^\top P w_j \Big)
\Bigg] \nonumber\\
&=
\left[
\left(\frac{\gamma}{1-\gamma}\right)^2
-
\frac{\gamma^2}{1-\gamma^2}
\right]
\mathbb{E}\!\left[w_i^\top P w_i\right]
\mathbb{E}\!\left[w_j^\top P w_j\right] \nonumber\\
&=
\left[
\left(\frac{\gamma}{1-\gamma}\right)^2
-
\frac{\gamma^2}{1-\gamma^2}
\right]
\operatorname{tr}^2(P\Sigma).
\label{eq:Gamma1nondiagonal}
\end{align}
\end{enumerate}

Adding the results of Eqs. \eqref{eq:Gamma1diagonal} and \eqref{eq:Gamma1nondiagonal} gives $\mathbb{E}[\Gamma_1^2]$. Therefore, we obtain $\operatorname{Var}[\Gamma_1]$.
\begin{align*}
        &\operatorname{Var}[\Gamma_1]= \left[\left( \frac{\gamma}{1 - \gamma} \right)^2 - \frac{\gamma^2}{1 - \gamma^2}\right] \mathrm{tr}^2  \left( P\Sigma \right)\\&+\frac{\gamma^2}{1-\gamma^2}\mathbb{E}[(w_i^\top P w_i)^2]-\left[\frac{\gamma}{1-\gamma}\operatorname{tr}(P\Sigma) \right]^2 \\
        &=\frac{\gamma^2}{1-\gamma^2}\mathbb{E}[(w_k^\top P w_k)^2]- \frac{\gamma^2}{1 - \gamma^2} \mathrm{tr}^2  \left( P\Sigma \right).
\end{align*}
We then focus on the variance of $\Gamma_2$. Since $s_k$ is independent of $w_k$, we have $\mathbb{E}[\Gamma_2]=0$. Then, it follows that
\begin{align}
        \operatorname{Var}[\Gamma_2]&=\mathbb{E}[\Gamma_2^2]=4\gamma^2\mathbb{E}\Big[\Big(\sum_{k=0}^{\infty}\gamma^k w_k^\top P s_k \Big)^2\Big]  \nonumber \\
        &=4\gamma^2\mathbb{E}\Big[ \Big(\sum_{i=0}^{\infty}\gamma^i w_i^\top P s_i \Big)\Big(\sum_{j=0}^{\infty}\gamma^j w_j^\top P s_j \Big) \Big]. \label{eq:Gamma2proto}
\end{align}
We focus on the case where $i\neq j$ in Eq.~\eqref{eq:Gamma2proto}: 
\begin{equation*}
   \mathbb{E}\Big[(\gamma^i w_i^\top P s_i)(\gamma^j w_j^\top P s_j)\Big]=\gamma^{i+j}\mathbb{E}[( w_i^\top P s_i)( w_j^\top P s_j)].
\end{equation*}
When $i>j$, $w_i$ is independent of $w_j$ because of i.i.d. property. 
From the definition of $s_k$, $w_i$ is independent of $s_i$. Besides, $w_i$ is independent of $s_j$ because $s_j$ is a history prediction of  state $\xi_j$ at series index $j$ before series index~$i$. As a result, $w_i$ is independent of $w_j$, $s_i$, $s_j$, which yields
\begin{equation*}
    \mathbb{E}[( w_i^\top P s_i)( w_j^\top P s_j)]=\underbrace{\mathbb{E}[w_i^\top]}_{0}\mathbb{E}[P s_iw_j^\top Ps_j]=0.
\end{equation*} 
By symmetry between $i$ and $j$, the situation for $i<j$ is identical to $i>j$. Therefore, we have  $\mathbb{E}[( w_i^\top P s_i)( w_j^\top P s_j)]=0$, for all $i\neq j$. Then, we have
\begin{align*}
&\operatorname{Var}[\Gamma_2]
=4\gamma^2\sum_{k=0}^{\infty}\gamma^{2k}
\mathbb{E}\!\left[(w_k^\top P s_k)^2\right] 
=4\gamma^2\sum_{k=0}^{\infty}\gamma^{2k}\\
&\operatorname{tr}\!\left(P\Sigma P\,\mathbb{E}[s_k s_k^\top]\right)
=4\gamma^2\operatorname{tr}\!\left(
P\Sigma P\sum_{k=0}^{\infty}\gamma^{2k}
\mathbb{E}[s_k s_k^\top]
\right).
\end{align*}
To this end, we have 
\begin{align}
    &\operatorname{Var}[G^K(x)]=\operatorname{Var}[\Gamma_1]+\operatorname{Var}[\Gamma_2]= \frac{\gamma^2}{1-\gamma^2}\mathbb{E}[(w_k^\top P w_k)^2] \notag
    \\&- \frac{\gamma^2}{1 - \gamma^2} \mathrm{tr}^2  \left( P\Sigma \right)+4\gamma^2 \operatorname{tr}\Big\{ \sum_{k=0}^{\infty}\gamma^{2k}P\Sigma
    P\mathbb{E}[ s_ks_k^\top ]  \Big\}. \label{eq:44}
\end{align}

Define $Z':=\sum_{k=0}^{\infty}\gamma^{2k}\mathbb{E}[s_ks_k^\top].$ 
To give the variance of $G^K(x)$ in \eqref{eq:globalvariance} defined by $Z$, we need to prove that $Z$, the solution to a Lyapunov equation \eqref{eq:Pxsigma}, is exactly $Z'$. Recall the definition of  $\xi_k$ in \eqref{Eq:auxiliary_state}. 
To show this, let us consider the recursive relationship between $s_{k+1}$ and $s_k$:
    \begin{equation*}
s_{k+1}=A_K{\xi_{k+1}}=A_K(A_K\xi_k+w_k)=A_Ks_k+A_Kw_k.
    \end{equation*}
    Then, we represent the recursive relationship between $\mathbb{E}[s_ks_k^\top ]$ and $\mathbb{E}[s_{k+1}s_{k+1}^\top ]$:
    \begin{align}
        &\mathbb{E}[s_{k+1}s_{k+1}^\top ]= \mathbb{E}[(A_Ks_k+A_Kw_k)(A_Ks_k+A_Kw_k)^\top] \nonumber\\
        &=A_K\mathbb{E}[s_ks_k^\top]A_K^\top +A_K\Sigma A_K^\top, \label{eq:recursives}
        \end{align}
    where the cross terms  vanish since $s_k$ and $w_k$ are independent.
    The initial  $\mathbb{E}[s_0s_0^\top ]$ is $A_K xx^\top A_K^\top$. Recalling the subtraction in \eqref{eq:Pxsigma}, we have the following.
    \begin{align*}
       &\sum_{k=0}^{\infty}\gamma^{2k}\mathbb{E}[s_ks_k^\top]-\gamma^2A_K\sum_{k=0}^{\infty}\gamma^{2k}\mathbb{E}[s_ks_k^\top]A_K^\top \\
       &=A_K xx^\top A_K^\top+ \sum_{k=1}^{\infty}\gamma^{2k}\mathbb{E}[s_ks_k^\top]
    \\&-A_K\sum_{k=1}^{\infty}\gamma^{2k}\mathbb{E}[s_{k-1}s_{k-1}^\top]A_K^\top.
    \end{align*}
    Using the recursive relationship in \eqref{eq:recursives}, we have
\begin{align}
&\underbrace{\sum_{k=0}^{\infty}\gamma^{2k}\mathbb{E}[s_ks_k^\top]}_{Z'}-\gamma^2A_K\underbrace{\sum_{k=0}^{\infty}\gamma^{2k}\mathbb{E}[s_ks_k^\top]}_{Z'}A_K^\top \nonumber
     \\&= A_Kxx^\top A_K^\top+
\sum_{k=1}^{\infty}\gamma^{2k}\underbrace{\Big( \mathbb{E}[s_ks_k^\top]- A_K\mathbb{E}[s_{k-1}s_{k-1}^\top ] A_K^\top \Big)}_{A_K\Sigma A_K^\top} \nonumber\\
        &= A_K (xx^\top+\frac{\gamma^2}{1-\gamma^2}\Sigma) A_K^\top. \label{eq:49}
    \end{align}
Comparing \eqref{eq:49} with \eqref{eq:Pxsigma}, we conclude that $Z = Z'$.
Substituting $Z$ into \eqref{eq:44} completes the proof. \hspace*{\fill} \qed

%% file: Sec/Appendix_C.tex
\section*{Appendix C: Proof of Proposition~\ref{prop:am}}
Once $\Lambda_P$ and
$\Lambda_Z$ are chosen to satisfy the adjoint equations
\eqref{eq:adj_P}--\eqref{eq:adj_Z} (i.e.\ to eliminate the
implicit $K$-dependence through $P$ and $Z$), the total derivative
of $J$ along any perturbation $\mathrm{d}K$ equals the
\emph{explicit} partial derivative of the Lagrangian:
    \begin{align*}
   & \mathrm{d}J
    = \tr(\mathrm{d}K^\top \frac{\partial \mathcal{L}}{\partial K}) = \tr(\mathrm{d}K^\top  \frac{\partial J}{\partial K}\bigg|_{P,Z} ) \nonumber \\
     & + \tr\!\Bigl(\mathrm{d}K^{\top}\,
\Bigl[\frac{\partial\,\tr(\Lambda_P^{\top}F_P)}{\partial K}
        \Bigr]\Bigr)
      + \tr\!\Bigl(\mathrm{d}K^{\top}\,\Bigl[\frac{\partial\,\tr(\Lambda_Z^{\top}F_Z)}{\partial K}
        \Bigr]\Bigr).\label{eq:total_deriv} 
\end{align*}
Since $J$ has no explicit dependence on $K$ (it depends on $K$ only
implicitly through $P$ and $Z$), $\frac{\partial J}{\partial K}\bigg|_{P,Z}=0$.
The gradient is then identified by $\mathrm{d}J
= \tr(\mathrm{d}K^{\top}\nabla_K J)$, so
$\nabla_K J = \frac{\partial\,\tr(\Lambda_P^{\top}F_P)}{\partial K}
+ \frac{\partial\,\tr(\Lambda_Z^{\top}F_Z)}{\partial K}$.

Let us begin with $\frac{\partial\,\tr(\Lambda_P^{\top}F_P)}{\partial K}$. Recall that
$F_P = P - Q - K^{\top}RK - \gamma A_K^{\top}PA_K$.
With $P$ fixed and $\mathrm{d}A_K = B\,\mathrm{d}K$:
\begin{align*}
    \mathrm{d}F_P\big|_P
    &= -\mathrm{d}K^{\top}RK - K^{\top}R\,\mathrm{d}K
       \\
       &\quad - \gamma\bigl(\mathrm{d}K^{\top}B^{\top}PA_K
         + A_K^{\top}PB\,\mathrm{d}K\bigr).
\end{align*}
It then follows that 
\begin{align*}
    \tr(\Lambda_P^{\top}\,\mathrm{d}F_P)
    &= -\tr\!\bigl(\mathrm{d}K^{\top}RK\Lambda_P\bigr)
       -\tr\!\bigl(\mathrm{d}K^{\top}(K^{\top}R)^{\top}\Lambda_P\bigr) \\
    &\quad
       -\gamma\,\tr\!\bigl(\mathrm{d}K^{\top}B^{\top}PA_K\Lambda_P\bigr)
      \\
     & \quad -\gamma\,\tr\!\bigl(\mathrm{d}K^{\top}
         (A_K^{\top}PB)^{\top}\Lambda_P\bigr).
\end{align*}
Putting them together gives that  
\begin{equation}\label{eq:grad_FP}
    \frac{\partial\,\tr(\Lambda_P^{\top}F_P)}{\partial K}
    = -2(RK + \gamma B^{\top}PA_K)\Lambda_P.
\end{equation}
One can apply similar steps  to $\frac{\partial\,\tr(\Lambda_Z^{\top}F_Z)}{\partial K}$, which gives 
\begin{equation}\label{eq:grad_FZ}
    \frac{\partial\,\tr(\Lambda_Z^{\top}F_Z)}{\partial K}
    = -2\gamma^2\,B^{\top}\Lambda_Z A_K(Z+M).
\end{equation}
From \eqref{eq:grad_FP}  
and \eqref{eq:grad_FZ}, we have  
\[
    \nabla_K J
    = -2(RK + \gamma B^{\top}PA_K)\Lambda_P
      - 2\gamma^2\,B^{\top}\Lambda_Z A_K(Z+M).
\]
We complete the proof.     \textbf{\hspace*{\fill} \qed}

%% file: Sec/Appendix_D.tex
\section*{Appendix D: Proof of Proposition~\ref{prop:barrier_grad}}
Let $\phi:=\log\det\widetilde{P}(K)$. 
We will apply the adjoint method to the  function
$\log\det\widetilde{P}(K)$
subject to the constraint $H(K,\widetilde{P}) =
\widetilde{P} - A_K^{\top}\widetilde{P}A_K - I = 0$.
Define the Lagrangian:
\[
    \widetilde{\mathcal{L}}(K,\widetilde{P},\widetilde{\Lambda})
    = \log\det\widetilde{P}
      + \tr\!\bigl(\widetilde{\Lambda}^{\top}H(K,\widetilde{P})\bigr)
\]
where $\widetilde{\Lambda}$ is the Lagrange multiplier. 
We first calculate the partial derivative of $   \widetilde{\mathcal{L}}$
w.r.t.\ $\widetilde{P}$: 
$\frac{\partial}{\partial\widetilde{P}}   \widetilde{\mathcal{L}} = \widetilde{P}^{-1}
    + \widetilde{\Lambda} - A_K\widetilde{\Lambda}A_K^{\top}.$
Setting $\partial   \widetilde{\mathcal{L}}/\partial\widetilde{P}=0$ gives 
\begin{equation*}
    \widetilde{P}^{-1}
    + \widetilde{\Lambda} - A_K\widetilde{\Lambda}A_K^{\top} = 0
    \quad\Longrightarrow\quad
    \widetilde{\Lambda} - A_K\widetilde{\Lambda}A_K^{\top}
    = -\widetilde{P}^{-1}.
\end{equation*}
Similar to the proof of Proposition~\ref{prop:am}, we have
\begin{align*}
     &\mathrm{d}\phi
    = \tr\!\bigl(\widetilde{\Lambda}^{\top}\mathrm{d}H
      \big|_{\widetilde{P}}\bigr),   \\
      &\mathrm{d}H\big|_{\widetilde{P}}
    = -\mathrm{d}K^{\top}B^{\top}\widetilde{P}A_K
      - A_K^{\top}\widetilde{P}B\,\mathrm{d}K,\\
      & \tr\!\bigl(\widetilde{\Lambda}^{\top}\mathrm{d}H
      \big|_{\widetilde{P}}\bigr) = -2\,\tr\!\bigl(\mathrm{d}K^{\top}\cdot
      B^{\top}\widetilde{P}A_K\widetilde{\Lambda}\bigr). 
\end{align*}
Comparing with $\mathrm{d}\phi = \tr(\mathrm{d}K^{\top}
\nabla_K\phi)$, the gradient is 
$\nabla_K\phi
    = -2B^{\top}\widetilde{P}A_K\widetilde{\Lambda}$. \textbf{\hspace*{\fill} \qed}

%% file: Sec/Appendix_E.tex
\section*{Appendix E: Proof of Theorem~\ref{thm:conv}}
To formally prove Theorem~\ref{thm:conv}, we need to establish some fundamental properties of $J_\mu$ and $\mathcal{S}_0$.
\subsection*{Appendix E.1: Smoothness of $J_\mu$}
This subsection aims to prove that $J_\mu$ is smooth over $\Dstab$, as shown in the following lemma. 
\begin{lemma}
    The function $J_\mu(K)$ is smooth on
    $K\in\Dstab$.
\end{lemma}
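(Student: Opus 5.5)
Since $J_\mu = J + \mu\log\det\widetilde P$, we treat it as a composition of smooth maps. The key observation is that each of $P$, $Z$ and $\widetilde P$ is the solution of a linear matrix equation. The coefficients of that equation depend polynomially on $K$, and the linear operator involved is invertible on all of $\Dstab$.

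\textbf{Step 1: $K\mapsto A_K$.} The map $K\mapsto A_K=A+BK$ is affine, hence $C^\infty$ (indeed real-analytic).

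\textbf{Step 2: the Lyapunov solutions.} Vectorize \eqref{eq:lyap_P} as
\[
(I-\gamma A_K^\top\otimes A_K^\top)\,\mathrm{vec}(P)=\mathrm{vec}(Q+K^\top RK).
\]
For $K\in\Dstab$, the Remark following Lemma~\ref{lem:adjoint} gives $\rho(\gamma A_K\otimes A_K)=\gamma\rho(A_K)^2<1$. Hence the matrix $\mathcal{T}_\gamma(K):=I-\gamma A_K^\top\otimes A_K^\top$ is nonsingular. Its entries are polynomial in $K$, so $\det\mathcal{T}_\gamma(K)$ is a polynomial that does not vanish on $\Dstab$. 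By Cramer's rule,
\[
\mathrm{vec}(P)=\mathcal{T}_\gamma(K)^{-1}\mathrm{vec}(Q+K^\top RK)
\]
is a rational function of $K$ with nonvanishing denominator, hence $C^\infty$ on $\Dstab$.

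The same argument applies to $Z$, using the operator $I-\gamma^2A_K\otimes A_K$ and the right-hand side $A_K(xx^\top+\tfrac{\gamma^2}{1-\gamma^2}\Sigma)A_K^\top$. It also applies to $\widetilde P$, using $I-A_K^\top\otimes A_K^\top$ with the constant right-hand side $I$. For $\widetilde P$ we need $\rho(A_K)^2<1$, which is exactly the condition defining $\Dstab$.

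We also note that $\Dstab$ is open, since the spectral radius is continuous in $K$. So smoothness on $\Dstab$ is meaningful in the usual sense.

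\textbf{Step 3: the objective $J$.} The function $J(K,P,Z)=x^\top Px+b\,\tr(P\Sigma)+c\,\tr(P\Sigma P\Sigma)+d\,\tr(P\Sigma PZ)$ is a polynomial in the entries of $P$ and $Z$. Composing with Step 2 shows $J(K)$ is $C^\infty$ on $\Dstab$.

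\textbf{Step 4: the barrier term (the only genuinely non-polynomial piece).} We must show $\widetilde P(K)\succ0$, so that $\log\det$ is defined and smooth near $\widetilde P(K)$.

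Since $\rho(A_K)<1$, the series $\widetilde P=\sum_{t\ge0}(A_K^\top)^tA_K^t$ converges. Its $t=0$ term is $I$ and every other term is positive semidefinite, so $\widetilde P\succeq I\succ0$. In particular $\det\widetilde P(K)\ge1>0$.

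The map $\log\det$ is $C^\infty$ on the open cone $\mathbb{S}^n_{++}$, with derivative $\widetilde P^{-1}$. Hence $K\mapsto\mu\log\det\widetilde P(K)$ is $C^\infty$ on $\Dstab$.

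\textbf{Conclusion.} Summing the smooth pieces from Steps 3 and 4 shows that $J_\mu$ is $C^\infty$ on $\Dstab$. As a byproduct, its gradient is given by the formulas in Propositions~\ref{prop:am} and~\ref{prop:barrier_grad}, whose adjoint variables are themselves smooth by the same Cramer's-rule argument.

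The main point requiring care is not a calculation. It is checking that every Lyapunov operator involved, including the undiscounted one for $\widetilde P$, is uniformly invertible on $\Dstab$. This holds because $\Dstab$ is defined by the strict inequality $\rho(A_K)<1$. It is also the reason smoothness fails on $\partial\Dstab$, which is precisely where the barrier is designed to blow up.
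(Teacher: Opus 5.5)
Your proof is correct and follows the same route as the paper's. The paper also argues that the vectorized Lyapunov equations make $K\mapsto P,Z,\widetilde P$ smooth (indeed rational) on $\Dstab$, that $J$ is polynomial in $P,Z$, and that $\log\det$ is smooth on $\mathbb{S}^n_{++}$; you fill in these details, and you rightly only need $\widetilde P\succeq I$ rather than positive definiteness of $Z$, which the paper asserts but which fails when $A_K$ is singular (e.g.\ $A_K=0$ gives $Z=0$).

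One wording fix: the operator $I-A_K^\top\otimes A_K^\top$ is not \emph{uniformly} invertible on $\Dstab$, since its inverse blows up as $\rho(A_K)\to1^-$. Your argument only uses pointwise invertibility on the open set $\Dstab$, and that is all you need.
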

\begin{proof}
    First,  it follows from  the vectorized form of Lyapunov equations that the maps
$K\mapsto P(K)$,  $K\mapsto Z(K)$, $K\mapsto \widetilde{P}(K)$
are smooth  matrix-valued functions on $\Dstab$. And the solutions of Lyapunov equations ensure that $P(K)$, $Z(K)$, and $\widetilde{P}(K)$ are symmetric positive definite. 

Recall that the definition of $J_\mu(K)$ involves   polynomial operations and $\log\det$ over the $P(K)$, $Z(K)$, and $\widetilde{P}(K)$, respectively. Thus, $J_\mu(K)$ is smooth for $K\in\Dstab$.
\hspace*{\fill} \qed
\end{proof}
\subsection*{Appendix E.2: Compactness of the set $\mathcal{S}_0$}
This subsection aims to prove the  compactness of the set  $\mathcal{S}_0 = \{K\in\R^{m\times n} \mid J_\mu(K)\le J_\mu(K_0)\}$, where  $K_0\in\Dstab$ is the initial stabilizing controller  (line 1 in Algorithm~\ref{Alg:Adjoinddescent}). 
We begin by showing that 
$J_\mu(K)$ goes to $+\infty$ for $K\notin \Dstab$, highlighting the barrier role of $\mu\log\det\widetilde{P}(K)$ in the following lemma. 
\begin{lemma}\label{lem:blowup}
Let \(\widetilde P(K)\succ 0\) be the unique solution of
\begin{equation*}
\widetilde P - A_K^\top \widetilde P A_K = I,
\qquad K\in \mathcal{D}_{\mathrm{stab}}.
\end{equation*}
Then, as \(\rho(A_K)\to 1^{-}\),
\begin{equation*}
\log\det \widetilde P(K)\to +\infty, \ J_\mu(K)\to+\infty.
\end{equation*}
\end{lemma}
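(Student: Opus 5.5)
Since $K\in\Dstab$, the undiscounted Lyapunov equation has the series solution
\[
\widetilde P(K)=\sum_{t=0}^{\infty}(A_K^\top)^t A_K^t .
\]
I will lower-bound $\log\det\widetilde P(K)$ through its largest eigenvalue and show that this eigenvalue blows up along an eigenvector of $A_K$ whose eigenvalue approaches the unit circle.

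\textbf{Step 1: every eigenvalue of $\widetilde P$ is at least one.} Since $\widetilde P\succeq I$, all eigenvalues of $\widetilde P$ are at least $1$. Hence
\[
\log\det\widetilde P\ \ge\ \log\lambda_{\max}(\widetilde P),
\]
because every other eigenvalue contributes a nonnegative logarithm.

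\textbf{Step 2: a spectral lower bound.} Let $\lambda$ be an eigenvalue of $A_K$ with $|\lambda|=\rho(A_K)$, and let $v\in\mathbb{C}^n$ be a unit eigenvector, $A_Kv=\lambda v$. Using the Hermitian form of the real symmetric matrix $\widetilde P$,
\[
\lambda_{\max}(\widetilde P)\ \ge\ v^{*}\widetilde P v=\sum_{t\ge0}\|A_K^t v\|^2=\sum_{t\ge0}|\lambda|^{2t}=\frac{1}{1-\rho(A_K)^2}.
\]
The same identity follows directly from the Lyapunov equation: $v^*\widetilde Pv-|\lambda|^2v^*\widetilde Pv=1$. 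Therefore
\[
\log\det\widetilde P(K)\ \ge\ -\log\bigl(1-\rho(A_K)^2\bigr)\ \to\ +\infty \quad\text{as } \rho(A_K)\to1^-.
\]

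\textbf{Step 3: a lower bound on $J$.} The claim for $J_\mu$ requires $J(K)$ to be bounded below on $\Dstab$. This holds because $P\succeq Q\succ0$ (from the series solution for $P$), and, since $Z\succeq 0$ as a sum of covariance-type terms, every term
\[
x^\top Px,\quad b\operatorname{tr}(P\Sigma),\quad c\operatorname{tr}(P\Sigma P\Sigma)=c\|\Sigma^{1/2}P\Sigma^{1/2}\|_F^2,\quad d\operatorname{tr}(P\Sigma PZ)=d\operatorname{tr}\bigl(Z^{1/2}P\Sigma PZ^{1/2}\bigr)
\]
is nonnegative (with $b,c,d\ge0$). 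So $J\ge0$, and consequently
\[
J_\mu(K)\ \ge\ \mu\log\det\widetilde P(K)\ \to\ +\infty .
\]

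\textbf{Main obstacle.} The only delicate point is using a complex eigenvector with the real symmetric $\widetilde P$. I handle it by working with the Hermitian form $v^*\widetilde Pv$, whose value is bounded by $\lambda_{\max}(\widetilde P)$ for unit $v$. No uniform control over $K$ is needed, since the lower bound in Step 2 depends only on $\rho(A_K)$.
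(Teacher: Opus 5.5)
Your proposal is correct and follows essentially the same route as the paper: the series representation giving $\widetilde P\succeq I$, then the Rayleigh-quotient bound $\lambda_{\max}(\widetilde P)\ge z^{*}\widetilde P z = 1/(1-\rho(A_K)^2)$ at a unit eigenvector $z$ of $A_K$ attaining the spectral radius, and hence $\log\det\widetilde P\ge -\log(1-\rho(A_K)^2)$. Your Step~3 checks term by term that $J\ge 0$, which makes explicit the nonnegativity the paper uses without comment when it writes $J_\mu(K)\ge\mu\log\det\widetilde P(K)$.
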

\begin{proof}
For every \(K\in \mathcal{D}_{\mathrm{stab}}\), the Lyapunov equation admits the convergent series representation \(
\widetilde P
=
\sum_{t=0}^{\infty}(A_K^\top)^t A_K^t
\succeq I. \)
Let \(\lambda\in\mathbb C\) be an eigenvalue of \(A_K\) with \(|\lambda|=\rho(A_K)\), and let \(z\in\mathbb C^n\) be a corresponding unit eigenvector. Then
\begin{align*}
z^* \widetilde P z
&=
\sum_{t=0}^{\infty} z^*(A_K^\top)^t A_K^t z
=
\sum_{t=0}^{\infty} \|A_K^t z\|_2^2 \\
&=
\sum_{t=0}^{\infty} |\lambda|^{2t}
=
\frac{1}{1-\rho(A_K)^2}.
\end{align*}
Since \(\widetilde P\) is symmetric positive definite,
\begin{equation*}
\|\widetilde P\|
=
\lambda_{\max}(\widetilde P)
\ge
z^*\widetilde P z
=
\frac{1}{1-\rho(A_K)^2},
\end{equation*}
and therefore \(\|\widetilde P(K)\|\to+\infty\) as \(\rho(A_K)\to1^{-}\).
Moreover, let $\lambda_i(\widetilde P)$ denote the $i$-th eigenvalue of
$\widetilde P$, ordered as
$\lambda_{\rm min}(\widetilde P)=\lambda_1(\widetilde P)\leq\cdots\leq\lambda_n(\widetilde P)=\lambda_{\rm max}(\widetilde P)$.
Then $\widetilde P\succeq I$ implies $\lambda_i(\widetilde P)\geq 1$ for all $i$, so we have
\begin{equation*}
\begin{aligned}
\log\det \widetilde P
&=
\sum_{i=1}^n \log \lambda_i(\widetilde P)
\ge
\log \lambda_{\max}(\widetilde P) \\
&\ge
-\log\!\bigl(1-\rho(A_K)^2\bigr).
\end{aligned}
\end{equation*}
Hence,  \(\log\det\widetilde P(K)\to+\infty\) as \(\rho(A_K)\to1^{-}\), which   gives  
\[
    J_\mu(K) \ge \mu\log\det\widetilde P(K) \to +\infty. \tag*{$\qed$}
\]
\end{proof}

\begin{lemma}\label{lem:coercivity}
    The function $J_\mu(K)\to+\infty$ as $\|K\|_F\to\infty$. 
\end{lemma}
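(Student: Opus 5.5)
The plan is to prove Lemma~\ref{lem:coercivity} with an explicit quadratic lower bound on $J_\mu$ over $\Dstab$, namely
\[
J_\mu(K)\ \ge\ b\,\lambda_{\min}(\Sigma)\,\lambda_{\min}(R)\,\|K\|_F^2 \qquad \text{for all } K\in\Dstab .
\]
Since $b=\gamma/(1-\gamma)>0$, $\Sigma\in\mathbb{S}^n_{++}$ and $R\succ 0$, the constant is strictly positive, so the right-hand side tends to $+\infty$ as $\|K\|_F\to\infty$. Outside $\Dstab$ we use the convention, already implicit in the definition of $\mathcal{S}_0$ and in Lemma~\ref{lem:blowup}, that $J_\mu=+\infty$. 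The claim is therefore only substantive on $\Dstab$.

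\textbf{Step 1: discard the nonnegative terms.} For $K\in\Dstab$, each term of $J_\mu$ other than $b\,\tr(P\Sigma)$ is nonnegative.
\begin{itemize}
\item $x^\top P x\ge 0$, because $P\succ 0$ by Appendix E.1.
\item $c\,\tr(P\Sigma P\Sigma)=c\,\|\Sigma^{1/2}P\Sigma^{1/2}\|_F^2\ge 0$, since $c\ge 0$.
\item $d\,\tr(P\Sigma P Z)=d\,\tr\bigl(Z^{1/2}(P\Sigma P)Z^{1/2}\bigr)\ge 0$, because $P\Sigma P\succeq 0$, $Z\succeq 0$ and $d\ge 0$.
\item $\mu\log\det\widetilde P\ge 0$, because the series representation in the proof of Lemma~\ref{lem:blowup} gives $\widetilde P\succeq I$.
\end{itemize}
Hence $J_\mu(K)\ge b\,\tr(P\Sigma)$.

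\textbf{Step 2: bound $\tr(P\Sigma)$ from below by $\tr(P)$.} Since $P\succeq 0$, we have $\tr(P\Sigma)=\tr(P^{1/2}\Sigma P^{1/2})\ge\lambda_{\min}(\Sigma)\,\tr(P)$.

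\textbf{Step 3: bound $P$ below in terms of $K$.} For $K\in\Dstab$ we have $\rho(\sqrt{\gamma}A_K)<1$, so the Lyapunov equation \eqref{eq:lyap_P} has the convergent series solution
\[
P=\sum_{t\ge 0}\gamma^t (A_K^\top)^t\,(Q+K^\top R K)\,A_K^t .
\]
Every summand is positive semidefinite, so keeping only the $t=0$ term gives $P\succeq Q+K^\top RK\succeq K^\top RK$. Taking traces,
\[
\tr(P)\ \ge\ \tr(K^\top R K)\ \ge\ \lambda_{\min}(R)\,\|K\|_F^2 .
\]

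Chaining Steps 1--3 yields the displayed bound, and coercivity follows.

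No step should be a real obstacle. The only point needing care is the sign of the cross term $\tr(P\Sigma PZ)$, which Step 1 handles by writing it as the trace of a product of two positive semidefinite matrices.
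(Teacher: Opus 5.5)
Your proof is correct and follows essentially the same route as the paper. Both drop the nonnegative terms to get $J_\mu(K)\ge b\,\tr(P\Sigma)$, use $P\succeq Q+K^\top RK$ from the Lyapunov series, and arrive at the same constant $b\,\lmin(R)\,\lmin(\Sigma)$; the differences are cosmetic (you pull out $\lmin(\Sigma)$ first and drop $b\,\tr(Q\Sigma)$), and you usefully justify $\tr(P\Sigma PZ)\ge 0$ and $\log\det\widetilde P\ge 0$, which the paper only asserts.
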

\begin{proof}
When $K\notin\Dstab$,  it follows from Lemma~\ref{lem:blowup} that $J_\mu(K)=+\infty$. 
 When $K\in \Dstab$,   since $b>0$, $\lmin(R)>0$, $\lmin(\Sigma)>0$, and all other terms are non-negative ($c,d\ge 0$, $P,\Sigma,Z\succeq 0$), we have that when $\|K\|_F\to\infty$, 
\begin{align*}
    J_\mu(K) &\ge J(K)
    \ge b\,\tr(P(K)\,\Sigma) \\&\ge  b\,\tr(Q\Sigma) + b\,\tr(K\Sigma K^{\top}R) \\ 
    &\ge b\,\tr(Q\Sigma) + b\,\lmin(R)\,\lmin(\Sigma)\,\|K\|_F^2
    \to +\infty,
\end{align*}
which completes the proof. \hspace*{\fill} \qed
\end{proof}

We are ready to show the compactness of $\mathcal{S}_0$. 
\begin{lemma}
   The set 
$\mathcal{S}_0$ is compact and $\mathcal{S}_0\subset\Dstab$.
\end{lemma}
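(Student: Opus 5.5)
We show that $\mathcal{S}_0$ lies in $\Dstab$, is bounded, and is closed. We use the convention of Lemma~\ref{lem:coercivity} that $J_\mu(K)=+\infty$ for $K\notin\Dstab$. Since $J_\mu(K_0)<\infty$ for the stabilizing $K_0$, every $K\in\mathcal{S}_0$ has $J_\mu(K)<\infty$. Hence $\mathcal{S}_0\subset\Dstab$ immediately, and $\mathcal{S}_0$ is nonempty because it contains $K_0$.

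\emph{Boundedness.} By Lemma~\ref{lem:coercivity}, there is $r>0$ such that $J_\mu(K)>J_\mu(K_0)$ whenever $\|K\|_F>r$. To make this explicit, we use the lower bound from that proof:
\[
J_\mu(K)\ge J(K)+\mu\log\det\widetilde P \ge b\,\tr(Q\Sigma)+b\,\lmin(R)\lmin(\Sigma)\|K\|_F^2 ,
\]
where we drop the barrier term because $\widetilde P\succeq I$ gives $\log\det\widetilde P\ge 0$. It follows that $\mathcal{S}_0\subset\{\|K\|_F\le r\}$.

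\emph{Closedness.} Take a sequence $K_j\in\mathcal{S}_0$ with $K_j\to\bar K$. By boundedness, $\rho(A_{\bar K})\le 1$, since the spectral radius is continuous in $K$ and each $\rho(A_{K_j})<1$. Suppose $\rho(A_{\bar K})=1$. Then $\rho(A_{K_j})\to 1^-$, and the quantitative bound in the proof of Lemma~\ref{lem:blowup},
\[
J_\mu(K_j)\ge \mu\log\det\widetilde P(K_j)\ge -\mu\log\bigl(1-\rho(A_{K_j})^2\bigr),
\]
shows $J_\mu(K_j)\to+\infty$. This contradicts $J_\mu(K_j)\le J_\mu(K_0)$. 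Therefore $\bar K\in\Dstab$. By smoothness of $J_\mu$ on the open set $\Dstab$ (Appendix E.1), $J_\mu(\bar K)=\lim_j J_\mu(K_j)\le J_\mu(K_0)$, so $\bar K\in\mathcal{S}_0$.

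Together, closedness and boundedness give compactness by Heine--Borel. The main subtlety is the closedness step. The limit could a priori escape to $\partial\Dstab$, where $J$ itself stays finite. What excludes this is that the barrier bound depends only on $\rho(A_K)$, so it applies uniformly along any sequence with $\rho(A_{K_j})\to 1^-$. It is therefore important to invoke the explicit inequality from Lemma~\ref{lem:blowup} rather than a pointwise limit statement.
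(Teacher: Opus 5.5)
Your proof is correct and follows the same route as the paper: $\mathcal{S}_0\subset\Dstab$ from finiteness of $J_\mu(K_0)$, boundedness from the coercivity bound of Lemma~\ref{lem:coercivity}, closedness from the barrier blow-up of Lemma~\ref{lem:blowup}, and then Heine--Borel. Your sequential closedness argument only makes explicit what the paper states as lower semicontinuity of the extended-valued $J_\mu$. The phrase ``by boundedness'' before $\rho(A_{\bar K})\le 1$ is unnecessary, since continuity of the spectral radius alone gives that inequality.
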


\begin{proof}
First, since $K_0$ is stabilizing, $J_\mu(K)\le J_\mu(K_0)<+\infty$ for all $K\in \mathcal{S}_0$. By Lemma~\ref{lem:blowup}, $J_\mu(K)\to+\infty$ as
$\rho(A_K)\to 1^{-}$. Thus, $\mathcal{S}_0\subset\Dstab$
holds and  no sequence in $\mathcal{S}_0$ can
converge to a point on the boundary of $\Dstab$. 

By Lemma~\ref{lem:coercivity}, $J_\mu(K)\to+\infty$ as
$\|K\|_F\to\infty$, so $\mathcal{S}_0$ is bounded.
Since $J_\mu(K)$ is smooth when $K\in \Dstab$  and $J_\mu(K)=+\infty$ when $K\notin \Dstab$, $J_\mu(K)$ is lower semicontinuous for  $K\in \R^{m\times n}$. Thus, 
$\mathcal{S}_0 = \{K : J_\mu(K)\le J_\mu(K_0)\}$ is closed
in $\R^{m\times n}$. According to the Heine-Borel theorem, since $\mathcal{S}_0$ is closed and bounded in $\R^{m\times n}$, one  can conclude that $\mathcal{S}_0$ is compact. \hspace*{\fill} \qed
\end{proof}

\subsection*{Appendix E.3: Backtracking-Armijo line search termination}
This subsection shows that Backtracking-Armijo line search (lines 13--15 of Algorithm~\ref{Alg:Adjoinddescent}) terminates within a finite number of iterations.

\begin{lemma}\label{lem:armijo}
Let  $\beta,\tau\in(0,1)$ and $\eta_0>0$ be the inputs of Algorithm~\ref{Alg:Adjoinddescent}. 
   There exists $\delta_{\min}>0$ such that for every $K\in\mathcal{S}_0$ and
$\eta\in\bigl(0,\delta_{\min}\bigr]$,  
\begin{equation}\label{eq:armijo_ineq}
    J_\mu(K') \;\le\; J_\mu(K)
    - \tau\eta\|\nabla_K J_\mu(K)\|_F^2.
\end{equation}
Furthermore, the Backtracking-Armijo line search (lines 13--15 of Algorithm~\ref{Alg:Adjoinddescent})
terminates in finitely many steps, with accepted step size bounded below by
\begin{equation*}
  \forall k,  \  \eta_k \;\ge\; \eta_{\min},   \ \eta_{\min}
    =\min\!\bigl(\eta_0,\beta\delta_{\min}\bigr)
    > 0. 
\end{equation*}
\end{lemma}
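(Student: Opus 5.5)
The plan is the standard descent-lemma argument on a compact neighbourhood of $\mathcal{S}_0$ inside $\Dstab$, using the smoothness and compactness results of Appendices E.1--E.2.

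\emph{Step 1: a uniform safety margin.} Since $\mathcal{S}_0$ is compact and contained in the open set $\Dstab$, it has positive distance from the complement of $\Dstab$. Choose $r>0$ with $\mathcal{S}_0^{2r}:=\{K:\operatorname{dist}(K,\mathcal{S}_0)\le 2r\}\subset\Dstab$. The set $\mathcal{S}_0^{2r}$ is closed and bounded, hence compact. Since $J_\mu$ is smooth on $\Dstab$, we obtain finite constants
\[
G:=\max_{\mathcal{S}_0^{2r}}\|\nabla J_\mu\|_F, \qquad L:=\max_{\mathcal{S}_0^{2r}}\|\nabla^2 J_\mu\|,
\]
where $\|\nabla^2 J_\mu\|$ is the operator norm of the Hessian as a linear map on $\R^{m\times n}$ with the Frobenius inner product.

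\emph{Step 2: the Armijo inequality.} Fix $K\in\mathcal{S}_0$, set $g=\nabla J_\mu(K)$, and let $K'=K-\eta g$. If $\eta\le r/\max(G,1)$, then $\|\eta g\|_F\le r$, so the whole segment $[K,K']$ lies in $\mathcal{S}_0^{r}\subset\Dstab$. Taylor's theorem with the Hessian bound then gives
\[
J_\mu(K')\le J_\mu(K)-\eta\|g\|_F^2+\tfrac{L}{2}\eta^2\|g\|_F^2 .
\]
The right-hand side is at most $J_\mu(K)-\tau\eta\|g\|_F^2$ whenever $\eta\le 2(1-\tau)/L$. Therefore
\[
\delta_{\min}:=\min\!\Bigl(\frac{r}{\max(G,1)},\,\frac{2(1-\tau)}{L}\Bigr)>0
\]
works uniformly over $K\in\mathcal{S}_0$, which proves \eqref{eq:armijo_ineq}.

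\emph{Step 3: termination and the step-size floor.} I argue by induction on $k$. Suppose $K_k\in\mathcal{S}_0$. The trial steps are $\eta_0\beta^j$, $j=0,1,2,\dots$, and the first $j$ with $\eta_0\beta^j\le\delta_{\min}$ is accepted by Step 2, so the line search stops after finitely many trials.

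For the lower bound, there are two cases. If $\eta_0\le\delta_{\min}$, the very first trial is accepted and $\eta_k=\eta_0$. Otherwise the accepted step is the first $\eta$ with $\eta\le\delta_{\min}$, and its predecessor $\eta/\beta$ exceeded $\delta_{\min}$. Hence $\eta>\beta\delta_{\min}$. In both cases $\eta_k\ge\min(\eta_0,\beta\delta_{\min})=\eta_{\min}$.

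Finally, the accepted step gives $J_\mu(K_{k+1})\le J_\mu(K_k)\le J_\mu(K_0)$, so $K_{k+1}\in\mathcal{S}_0$ and the induction continues.

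\emph{Main obstacle.} The delicate point is ensuring that trial points with large $\eta$, where $K-\eta g$ may leave $\Dstab$ and $J_\mu=+\infty$, do not invalidate the argument. Such trials simply fail the Armijo test, because $+\infty$ exceeds the finite right-hand side, and the search shrinks $\eta$. The only genuinely needed ingredient is the uniform margin $r$ from Step 1, which ensures the Taylor estimate is applied only along segments that stay inside the compact set $\mathcal{S}_0^{r}$, where $G$ and $L$ are finite.
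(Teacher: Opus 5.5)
Your proposal is correct and follows essentially the same argument as the paper. Both use a compact tube around $\mathcal{S}_0$ inside $\Dstab$ with uniform gradient and Hessian bounds on it, a second-order Taylor estimate giving $\delta_{\min}=\min(\text{margin}/\text{gradient bound},\,2(1-\tau)/L)$, and the standard backtracking count for $\eta_k\ge\min(\eta_0,\beta\delta_{\min})$. One small imprecision, which the paper shares: the search may already accept some trial $\eta_0\beta^j>\delta_{\min}$, so the accepted step need not be the first one below $\delta_{\min}$; in that case $\eta_k>\delta_{\min}\ge\beta\delta_{\min}$, so your lower bound still holds.
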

\begin{proof}
Let us first prove the existence of $\delta_{\min}$. 
Since $J_\mu$ is smooth on $\Dstab$ and $\nabla_K J_\mu$ is continuous on compact $\mathcal{S}_0$, there exists a finite $D_{\max}>0$ such that 
\[\|\nabla_K J_\mu(K)\|_F\le D_{\max}, \ \forall K\in \mathcal{S}_0.\] Furthermore, 
since  $\Dstab$ is
open~\citep{bu2020topological} and 
$\mathcal{S}_0\subset\Dstab$ is compact, there exists $\varepsilon>0$ such that the closed tube
$\overline{\mathcal{N}_{\varepsilon/2}}
=\{K:\mathrm{dist}(K,\mathcal{S}_0)\le\varepsilon/2\}
\subset\Dstab$ is compact.
Since $J_\mu$ is smooth in $\Dstab$, the Hessian
$\nabla^2_K J_\mu$ is bounded on
the compact set $\overline{\mathcal{N}_{\varepsilon/2}}$. Define 
 \[L =
    \sup_{K\in\overline{\mathcal{N}_{\varepsilon/2}}}
    \|\nabla^2_K J_\mu(K)\|_{\mathrm{op}} \;<\; \infty\]
where $\|\cdot\|_{\mathrm{op}}$ is the operator norm. 
Set $\delta_1=\varepsilon/(2D_{\max})>0$.
For $\eta\le\delta_1$ and $s\in[0,1]$, the segment point
$K_s= K - s\eta\nabla_K J_\mu(K)$ satisfies:
\[
    \|K_s - K\|_F = s\eta\|\nabla_K J_\mu(K)\|_F
    \le \eta\,D_{\max} \le \varepsilon/2,
\]
so $K_s\in\overline{\mathcal{N}_{\varepsilon/2}}$.
Then, by the Fundamental Theorem of Calculus on the Hilbert space
$(\R^{m\times n},\langle\cdot,\cdot\rangle_F)$, where $\langle\cdot,\cdot\rangle_F$ is the Frobenius inner product:
\begin{align*}
    J_\mu(K')
    &= J_\mu(K) + \int_0^1\langle \nabla_KJ_\mu(K_s),\,
       K'-K\rangle_F\,\mathrm{d}s \\
    &= J_\mu(K) + \langle\nabla_K J_\mu(K),\,K'-K\rangle_F\\
       \\& + \int_0^1\langle\nabla_K J_\mu(K_s)-\nabla_K J_\mu(K),\,
       K'-K\rangle_F\,\mathrm{d}s. 
\end{align*}
 Applying $K'-K = -\eta\nabla_K J_\mu(K)$  gives that 
\begin{align*}
    J_\mu(K')
    &\le J_\mu(K) - \eta\|\nabla_K J_\mu(K)\|_F^2
    + \frac{L\eta^2}{2}\|\nabla_K J_\mu(K)\|_F^2\\
    &= J_\mu(K) - \eta\Bigl(1 - \frac{L\eta}{2}\Bigr)
      \|\nabla_K J_\mu(K)\|_F^2.
\end{align*}
To ensure that the condition~\eqref{eq:armijo_ineq}  holds, we need
$1 - L\eta/2 \ge \tau$, i.e.\ $\eta\le 2(1-\tau)/L$.
Set $\delta_2= 2(1-\tau)/L > 0$.

To sum up, set \(\delta_{\min} =
    \min\!\bigl(\delta_1,\;\delta_2\bigr) \;>\; 0. \) One can conclude that 
for  any $K\in\mathcal{S}_0$ and $\eta\in(0,\delta_{\min}]$, $K'=K - \eta\nabla_K J_\mu(K) \in\Dstab$ and 
          \eqref{eq:armijo_ineq} holds.

Next we prove the finite termination of the backtracking-Armijo line search. The backtracking  sequence is $\eta_0, \beta\eta_0, \beta^2\eta_0,\ldots$
Since $\beta\in (0,1)$, there exists a
\emph{finite} first index $\ell^*$ such that
$\beta^{\ell^*}\eta_0\le \delta_{\min}$ and $\beta^{\ell^*-1}\eta_0> \delta_{\min}$.
At that trial, the condition    \eqref{eq:armijo_ineq} is satisfied, so the
loop exits.  The accepted step is $\eta_k=\beta^{\ell^*}\eta_0$. Then, it is straightforward to show that for all $k$,  $\eta_k = \beta\cdot\beta^{\ell^*-1}\eta_0
    \ge \min\{\eta_0,\beta\cdot\delta_{\min}\}$.  \hspace*{\fill} \qed
\end{proof}

\subsection*{Appendix E.4:  Proof of Theorem~\ref{thm:conv}}
Now we are ready to provide a full proof of Theorem~\ref{thm:conv}. 
Let $\eta_{\min}$ be the same as in Lemma~\ref{lem:armijo}.   
We begin by proving statement (1), i.e., $K_k\in\mathcal{S}_0$, $\forall k$. When $k=0$,  $K_0\in\mathcal{S}_0$ holds by the initialization.  
Suppose $K_k\in\mathcal{S}_0$.
By Lemma~\ref{lem:armijo}, when the backtracking terminates, we have  $J_\mu(K_{k+1})\le J_\mu(K_k)-\tau \eta_k
          \|\nabla_K J_\mu(K_k)\|^2 < J_\mu(K_k)\le J_\mu(K_0)$, which implies that $K_{k+1}\in\mathcal{S}_0$.

The statement (2) directly follows from Lemma~\ref{lem:armijo}, i.e.,  the step sizes satisfy $\eta_k\ge\eta_{\min}>0$
          for all $k$.

To prove the statement (3), we recall that for all $k$, 
\begin{equation*}
    J_\mu(K_{k+1})
    \;\le\; J_\mu(K_k) - \tau\eta_{\min}\|\nabla_K J_\mu(K_k)\|^2.
\end{equation*}
Summing over $k=0,\ldots,N$  gives 
\begin{align}
     \tau \eta_{\min}\sum_{k=0}^{N}\|\nabla_K J_\mu(K_k)\|^2
    &\le J_\mu(K_0) - J_\mu(K_{N+1}) \nonumber\\
   &\hspace{-2cm}\le J_\mu(K_0) - \inf_{K\in \mathcal{S}_0}J_\mu(K)
    < +\infty \label{eq:converge}
\end{align}
which implies that  $\displaystyle\lim_{k\to\infty}\|\nabla_K J_\mu(K_k)\|=0$.

Finally, we prove the sublinear convergence. 
Dividing by $(N+1)$ in \eqref{eq:converge} and bounding the left side below:
\begin{align*}
    \tau\eta_{\min}
    \cdot\min_{0\le k\le N}\|\nabla_K J_\mu(K_k)\|^2 
  &\le
    \frac{1}{N+1}\sum_{k=0}^{N}\|\nabla_K J_\mu(K_k)\|^2 \\
  &\le
    \frac{J_\mu(K_0)-\inf_{K\in \mathcal{S}_0}J_\mu(K)}{N+1}.
\end{align*}
Dividing both sides by $\tau\eta_{\min}>0$ gives~\eqref{eq:rate}. \hspace*{\fill} \qed